\documentclass{article}
\usepackage[numbers]{natbib}
\usepackage{circuitikz}
\usepackage{float}
\usepackage{subcaption}
\usepackage{amsmath, hyperref, a4wide}
\usepackage{amsthm, amsfonts}
\def\tsc#1{\csdef{#1}{\textsc{\lowercase{#1}}\xspace}}
\tsc{WGM}
\tsc{QE}
\usepackage{authblk}
\newtheorem{theo}{Theorem}
\newtheorem{lemma}[theo]{Lemma}

\newtheorem{prop}[theo]{Proposition}

\newtheorem{claim}{Claim}

\newcommand{\CSP}[1]{\mbox{\rm CSP$(#1)$}}

\begin{document}

\title{The majority of a cactus is prickly.}

\author[1]{Catarina Carvalho}
\author[1]{Mathison Knight}
\author[2]{Barnaby Martin}

\affil[1]{Department of Physics, Astronomy and Mathematics, University of Hertfordshire}
\affil[2]{Department of Computer Science, Durham University}

\maketitle

\begin{abstract}
We characterise all (undirected) cacti graphs, and forests of cacti, with an edge-preserving  near-unanimity operation, by means of forbidden induced subgraphs. Indeed all such graphs  also have an edge-preserving  majority operation. We show that said  majority can be chosen to be symmetric and explicitly construct it for all forests of cacti without the forbidden induced subgraphs.
\end{abstract}

\section{Introduction}

Polymorphisms are fundamental combinatorial objects whose algebraic properties play a key role in the study of the computational complexity of Constraint Satisfaction Problems (CSPs) and their variants \cite{BulatovJK05, barto2017polymorphisms}. On graphs $H$, polymorphisms are just edge-preserving $k$-ary operations, which may also be seen as homomorphisms from the (direct, or tensor) power $H^k$ to $H$, for some $k\ge 2$.

 At the point the machinery of polymorphisms started to be used on arbitrary relational structures, it was noted that some special edge-preserving operations were known to be tied to the tractability of some graph homomorphism problems. In many cases, the original work did not explain the tractability in terms of polymorphisms, but this explanation subsequently became perfectly natural.
 For example, if a graph $H$ has something called the $\underbar{X}$-property then there is a linear time algorithm to solve the CSP on $H$, $\CSP H$ (or equivalently, the homomorphism problem of $H$, HOM($H$) \cite{Xunderbar}. The $\underbar{X}$-property can be seen as  the existence of a ${\emph minimum}$ polymorphism. 
  Historically, a fundamental role has been played by 
 \emph{majority} operations, which are ternary operations $m:H^3\longrightarrow H$ that satisfy the following identities
$$m(x, y, y)=m(y, x, y)=m(y, y, x)=y$$
for all vertices $x, y$ of a graph $H$.  
Majority operations were studied in a purely combinatorial context as far back as \cite{Bandelt1992GraphsWE}, pre-dating completely the modern study of CSPs, and have played a key role in the characterisation of the complexity of several CSP related problems, e.g., \cite{schaefersatisfiability, federvardi, dalmau2008majority, dehaan_et_al:LIPIcs.APPROX/RANDOM.2025.19, bulatovconservative}. \emph{Near-unanimity (NU)} operations $n:H^k\rightarrow H$ generalise majority operations to higher arities, and are $k$-ary operations  satisfying the identities
$$n(x, y, y,\ldots, y)=n(y, x, y, \ldots,y)=\cdots =n(y, y,y, \ldots, x)=y.$$
Call an NU operation of arity $k$ a $k$-NU. NUs also play an important role in the study of the computational complexity of CSP related problems, e.g. \cite{ bartoNU, dalmaurobust, DawarNU}. 

While the notion of a majority polymorphism, and other types of operations, is natural from an algebraic point of view,  for many  the question remains  on how they can be interpreted from a  graph theoretical point of view, e.g., for a given type of polymorphism what (classes of) graphs are preserved by it?
Work has been done in the topic since the  90's, e.g., \cite{Bandelt1992GraphsWE, carvalhomaltsev,  kazda2011maltsev, Larose03:nugraphs, hell2004graphs, quackenbush1990clones, Larose17, DBLP:conf/latin/HellRR24} but for many polymorphisms the question remains  open. 

In this paper we answer this question for the majority, and more generally NU, polymorphisms in the class of (forests of) cacti graphs.

\begin{theo}\label{theomajority}
A forest of cacti is preserved by an NU operation  if and only if it is loop-connected and it does not contain any of the following as an induced subgraph:
\begin{enumerate}
\item a cycle of length greater than or equal to 5;
\item an irreflexive triangle;
\item a square with at least  3 reflexive vertices.
\end{enumerate}
Furthermore if it does not have these induced subgraphs the forest of cacti is preserved by a symmetric majority polymorphism (i.e. a $3$-NU operation).
\end{theo}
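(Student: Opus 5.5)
We prove the two directions separately. For necessity, the standard tool is that having an NU polymorphism is inherited by induced subgraphs that are retracts, and more generally by any structure pp-definable from $H$ together with singleton unary relations (since an NU polymorphism of $H$ is idempotent, these constants are preserved). So we show that each forbidden configuration, when it occurs as an induced subgraph of a cactus forest, is pp-definable from $H$ with constants, and that it has no NU polymorphism. In a cactus an induced cycle $C$ is a block, and we expect $C$ to be a retract of its connected component: fold each pendant branch at a cut vertex onto that vertex, or onto a neighbour on $C$ when the vertex is irreflexive and an edge has to be preserved. Loop-connectedness is handled in the same spirit. If two loops lie in different components, or in one component but joined only by paths through irreflexive parts, then from $H$ we can define a disconnected reflexive set, or a set that fails the path condition, and such a set admits no NU polymorphism, by the classical fact that NU-preserved reflexive graphs have connected reflexive parts. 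Finally we verify directly that the three forbidden graphs admit no NU polymorphism of any arity. For reflexive $C_n$ with $n\ge 4$, and in particular the square with all four loops, this is the known result that reflexive cycles of length at least $4$ are not NU (they have nontrivial homotopy). For irreflexive odd cycles, including the irreflexive triangle, we invoke the NP-hardness of $\CSP{H}$ (Hell--Ne\v{s}et\v{r}il) together with the fact that an NU gives bounded width. For irreflexive even cycles $C_{2k}$ with $k\ge3$, we use that they are not NU since they are not ``dismantlable'' in the bipartite sense. For mixed loop patterns on $C_n$ with $n\ge5$, and for the square with exactly three loops, we plan a direct argument: apply a putative $k$-NU to the $k$ tuples walking around the cycle, and derive a contradiction with a parity or winding invariant.

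For sufficiency, we build an explicit symmetric majority $m$ on a cactus forest avoiding the configurations. The remaining blocks are bridges, triangles with at least one loop, squares with at most two loops (two loops forcing an irreflexive-free, nonadjacent or adjacent pattern), and edges. Loop-connectedness gives a connected ``reflexive core'' $R$, and the graph is a tree of such blocks. Our first attempt is the tree median. For $x,y,z$ in one component, the block-cut tree gives a unique median block. Set $m(x,y,z)$ to be the median vertex when the median lies at a cut vertex, and define $m$ by a small table inside the median block otherwise. When the triple meets more than one component, or has no majority, we map to a fixed reflexive vertex. If the graph is irreflexive, it is a bipartite tree-like cactus with only $4$-cycles, and we map according to the parity class instead. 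Symmetry holds because the medians are defined symmetrically.

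The main obstacle is checking edge preservation. We must show that if $x\sim x'$, $y\sim y'$ and $z\sim z'$, then $m(x,y,z)\sim m(x',y',z')$. Irreflexive vertices cause the trouble: the median of adjacent triples may move by one step, possibly onto an irreflexive vertex adjacent to itself, which is not allowed. We will handle this by a case analysis on the median block type (bridge, triangle, square) and on the loop pattern, using the hypotheses to guarantee that a loop is available where one is needed. We will then glue the local definitions along cut vertices, showing they agree there.
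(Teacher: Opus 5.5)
Your proposal has genuine gaps in both directions. The more serious one is that the sufficiency half contains no definition that could work.

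\textbf{Sufficiency.} The block median already fails on a partially reflexive path $r\sim a\sim b\sim c$ with a loop only at $r$. Your rule sends $(a,b,c)$ to $b$, since $b$ is a cut vertex. The coordinatewise adjacent triple $(b,a,b)$ must go to $b$ by the majority law, and $b$ has no loop. Your fallback for triples meeting several components also fails. Add an irreflexive edge $ss'$ as a second component (the forest is still loop-connected and avoids the forbidden subgraphs). Then $(s,a,c)$ is sent to the fixed loop $r$, but the adjacent triple $(s',b,b)$ must go to $b$, and $b\not\sim r$. A case analysis ``guaranteeing a loop is available'' cannot repair this, because the defect is one of parity.

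The missing idea is the one the paper's $m$ is built on:
\begin{itemize}
\item 2-colour the irreflexive components $X_i$ of the spanning tree $G_T$.
\item Order $\{a,b,c\}$ canonically as $u,v,w$, using leaf-indexed root paths $P_t$ and the projections $k^t_y$.
\item Return the median $x$ only when $[u]=[v]=[x]$.
\item Return its neighbour $x^+$ towards the reflexive core when $[u]=[v]\neq[x]$.
\item When $[u]\neq[v]$, return the attaching loop $p_{X_i}$ if the third entry leaves $X_i$ (or $0_i$/$3_i$ for two-loop squares, with the exceptional set $B$), and otherwise $k^t_w$ or $(k^t_w)^+$.
\end{itemize}
On the example this gives $m(a,b,c)=a\sim b$. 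Across components one must use $m(q_j,y,z)$, with a base vertex $q_j$ of the component holding the two entries, not a global constant.

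\textbf{Necessity.} The ``classical fact'' you use for loop-connectedness is false. Two isolated looped vertices form a reflexive graph whose edge relation is equality, which every operation preserves, so that graph has a majority. Loops in different components are therefore no obstruction: the paper's forest extension picks a loop $q_i$ in each component, so loop-connectedness must be read componentwise. Inside one cactus you need an actual argument for a shortest path $p,p+1,\dots,q$ between loops with irreflexive interior. The paper evaluates the NU at $(p,\dots,p,p+1,q)$ and $(p,\dots,p+1,p,q)$, obtaining adjacent common neighbours of $p$, and then uses that no edge of a cactus lies on two cycles.

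Your localisation by retraction also fails. An irreflexive $C_5$, $C_6$ or $K_3$ with a pendant looped vertex admits no homomorphism into the cycle at all, so the cycle is not a retract. You give no pp-definition to replace the retraction. The paper's cylinder gadgets $\mathrm{Cyl}^+(H)$, with constants on the base copy, are exactly what pp-define the cycle (or a parity class of it) together with disequality on it. This forces every idempotent polymorphism there to be a projection.

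Two smaller points. Appealing to NP-hardness of $\CSP{H}$ makes the odd-cycle case conditional on $\mathbf{P}\neq\mathbf{NP}$, whereas the theorem is unconditional. The mixed-loop cycles and the three-loop square are only announced. For the three-loop square, the paper shows each $n(0,\dots,0,1,2,\dots,2)$ is a common neighbour of $0,1,3$, hence equals $0$ in a cactus. This forces the chord between $0$ and $n(0,2,\dots,2)=2$.
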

 A majority  operation $m$ is called {\it symmetric} if it satisfies the identity
$$m(a_1, a_2,a_3) = m (a_{\pi(1)}, a_{\pi(2)},  a_{\pi(n)})$$
for all permutations $\pi$  of $\{1, 2, 3 \}$.

This is, as far as we are aware, the first characterisation for graphs with some loops allowed with an edge-preserving majority operation. If we restrict ourselves to  reflexive graphs (i.e. with loops in all vertices), it was proved in \cite{jawhari1986retracts} that
a reflexive graph is an absolute retract if and only if it admits an edge-preserving
majority operation. Also it was shown in \cite{Bandelt1992GraphsWE} that   an irreflexive graph  (i.e. with no loops allowed) admits an edge-preserving majority function  if and only if it is an absolute retract of a bipartite graph. 
Characterisations have also been found for reflexive and irreflexive graphs preserved by   NU operations  \cite{feder2013graphs, feder2014graphs, nureflexive, jaffenu}, inspired by a previous characterisation of partial ordered sets  which have order-preserving majority functions \cite{jawhari1986retracts}. Also, it is known that if a graph  $H$ admits a majority function then $H$ has treewidth two duality \cite{hell2004graphs}.

Even stating the characterisation as to which irreflexive graphs have NU is somewhat non-trivial. This prefigures many challenges in the general case in which loops are allowed but not forced. Our approach to this case is bottom up, considering simple forms and seeing how  we might generalise. The starting point may be seen as paths or trees. It was announced in the conference publication \cite{martin2011qcsp} that loop-connected trees have a majority polymorphism, but the given majority was specified ambiguously and the proof was only sketched. In this work, we clarify that result and extend it, through pseudo-trees, to {\it forests of cacti}, i.e.  graphs in which any two simple cycles have at most one vertex in common, and to NU operations of any arity.

Our characterisation does not stretch so far beyond cacti. Let $H$ be $K_4$ with a single looped vertex. Notwithstanding that this has an irreflexive $K_3$ as an induced subgraph, it does have a majority polymorphism.\footnote{In some sense $H$ is minimally non-cactus, since any two cycles share at most two edges. The class of graphs whose cycles share at most a single edge actually coincides with cacti, since two cycles joined on a single edge create a large cycle with at least two edges in common with either of the smaller cycles.}

We show that all cacti with an NU operation actually have a $3$-NU operation (a majority). This is not true for all graphs, for example the Haj\'os graph, or 2-sun graph,  with loops added to all vertices is preserved by a 4-NU  but not by a majority operation. Indeed, for each $k\geq 3$ there is a graph that is preserved by a $(k+1)$-NU but not by a $k$-NU \cite{brewster2008near}. 

We give a construction of our majority operation, and note that  this is not typical in the literature. For example, in \cite{feder2003bi}, a construction of the majority is given only for bi-arc trees, and the authors mention that the description of a majority function for all bi-arc graphs  does not appear to be an easy task. Furthermore the majority that we define is symmetric. It is not the case that all (di)graphs with a majority polymorphism have a symmetric majority polymorphism, for example the directed, irreflexive,  triangle has  an edge preserving  majority  operation but not a symmetric majority.

\section{Preliminaries and related work}

In this article, unless mentioned otherwise,  graphs are undirected, simple, and possibly with some loops, i.e. we allow some vertices to be adjacent to themselves. 

A homomorphism $f: G\longrightarrow H$ of a graph $G$ to a graph $H$ is a vertex mapping
which preserves edges, i.e. a mapping $f: V(G)\longrightarrow V(H)$ such that $(u, v)\in  E(G)$
implies $(f(u),  f(v))\in  E(H)$. 

Given a graph $H$, the $H$-colouring problem, or {\it graph homomorphism problem HOM$(H)$} takes as input a graph $G$ and asks if $G$ admits an homomorphism to $H$. This problem is equivalent to the Constraint Satisfaction Problem with the fixed target $H$, CSP($H$). Given a graph $G$ and lists $L(v)\subseteq V(H)$ for
all $v \in V(G)$, a list homomorphism of $G$ to $H$, with respect to the lists $L$, is
a homomorphism $f: G\longrightarrow H$ with $f(v)\in L(v)$ for each $v\in  V(G)$. 
The {\it list homomorphism problem L-HOM($H$)} asks, for input $G$ with lists $L$,
whether there is a list homomorphism of $G$ to $H$ with respect to $L$. The {\it retraction problem  RET($H$)} takes as input a graph $G$ and asks if $G$ is retractable to a fixed graph $H$. 

The algebraic approach to the study of the computational complexity of CSPs is driven through a Galois connection between operations closed under superposition and relations closed under definability in the logic that contains just $\exists$, $\wedge$ and $=$ (called \emph{primitive positive} (pp) logic). Specifically, the relations that are preserved by the polymorphisms of $H$ are precisely those that are pp-definable in $H$ \cite{geiger1968closed,bodnarchuk1969galois1}. It follows that if one can pp-define some relation in $H$ that is violated by some operation, then that operation cannot be a polymorphism of $H$.

The survey \cite{Hell06:there} provides a good first  view of the cross fertilisation of ideas from looking at CSPs as graph homomorphism problems and vice-versa, including an initial idea of the  importance of polymorphisms in the study of graph problems. More recent, the survey \cite{Larose17} provides some of  the key results on the interplay between algebraic and graph-theoretic methods in the study of graph homomorphism problems.
Naturally,  in  this interplay lies the question on how an algebraic idea, namely a polymorphism,  can be interpreted in graph-theoretical terms. 
We briefly  mention the types of polymorphisms for which some graph-theoretical characterisation is already know. 

A majority choice function, nowadays better known  as a {\it conservative  majority}, is a majority operation $m$ with the added requirement that $m(x, y, z)\in \{x, y, z\}$, for all vertices $x, y, z\in H$. Feder and Vardi \cite{federvardi} proved that if a graph $H$ has a conservative majority polymorphism then the  L-HOM($H$) is tractable, i.e. belongs to the complexity class $\mathbf{P}$. 
We now know that in fact it is in the complexity class {\bf{NL}} \cite{dalmau2008majority}, this is also the case for HOM($H$) and RET($H$), when $H$ has a  majority polymorphism. For these problems we do not need the majority to  be conservative.  For more details on computational and descriptive complexity of graph homomorphism problem and the interplay with the CSP we refer the reader  to \cite{Hell06:there, Larose17}.

Conservative majorities  were further studied in  \cite{feder2003bi}, where the authors showed that the graphs that have a conservative majority polymorphism are the  bi-arc graphs, which are  exactly  the ones with tractable List Homomorphism problem. This conservative  majority polymorphism is not explicitly presented in the paper, but it was described for trees in \cite{feder2007structure}. When we restrict ourselves to reflexive graphs the bi-arc graphs  correspond to interval graphs \cite{hell2004graphs} and for them a majority operation can be taken as the median in the order defined by the interval representation of the graph \cite[Theorem 4.1]{feder2003bi}. 
This leaves open the question on how to describe the conservative  majority polymorphism for bi-arc graphs that are neither trees nor fully reflexive,  but we note that in \cite{feder2003bi} the authors mentioned that 
``One may try to define a majority choice function for any bi-arc graph H; this
however does not appear to be an easy task.''
For directed graphs (digraphs) it was shown that they  admit a conservative majority function if and only if they have no permutable triple \cite{doi:10.1137/1.9781611973082.131}. While this is an interesting and useful result in proving the complexity of List Homomorphism problems, it still leaves open the question of which (classes of) digraphs  have permutable triples, and what the majority function will look like for them.

A ternary operation $f:H^3\rightarrow H$ is \emph{Maltsev} if it satisfies $f(x, y, y)=f(y, y, x)=x$
for all vertices $x, y$ of a graph $H$. The digraphs which have a Maltsev polymorphism have been classified in \cite{carvalhomaltsev}. It turns out that  all digraphs with a Maltsev polymorphism also have a majority polymorphism \cite{kazda2011maltsev}.

  A binary operation is a  \textit{semilattice} if it is  idempotent, symmetric and associative. Reflexive graphs which have a certain type of semilattice polymorphism, namely a non-crossing semilattice polymorphism, were shown to be chordal \cite{hell2014semilattice}.  For reflexive graphs to be preserved by a conservative semilattice  they have to be interval graphs \cite{FederH98}.  Also, it is known that if a graph $G$ has a non-crossing semilattice polymorphism of width $k-1$, then $G$ also has a $k+1$-NU polymorphism \cite{siggers2018reflexive}.

\paragraph*{Paper and Proof Outline}

A {\it cactus} is a connected graph in which any two simple cycles have at most one vertex in common. To a graph where each connected component is a cactus we assign the name {\it forest of cacti}.

A graph is {\it loop-connected} if its reflexive vertices induce a connected subgraph. In \cite{FederHJKN10} the authors proved that if a graph $H$ is not loop-connected, i.e. is {\it  loop-disconnected}, or has cycles of length greater than or equal to five,   then its retraction problem is {\bf NP}-complete. Assuming that ${\mathbf{P}\neq \mathbf{NP}}$, this means that a graph that is not loop-connected cannot be preserved by a majority polymorphism. For cacti we present a proof that does not rely on complexity theoretical assumptions.  

We show that  forests of cacti that have  a $k$-NU polymorphism, for any $k$,  are in fact preserved by a symmetric majority, and  explicitly construct said  majority. 
We note that for trees, a similar majority to the one we present was proposed in  \cite{martin2011qcsp}, without a proof. There, the existence of majority was used to characterise the tree templates whose Quantified Constraint Satisfaction Problem is in {\bf{NL}}, but the definition given was somewhat ambiguous and does not generalise to cacti containing cycles of length four.

In Section~\ref{omitting} we show that a  cacti containing any of the forbidden induced subgraphs mentioned in Theorem~\ref{theomajority} is not preserved by a $k$-NU for any $k$.  We do this  without the use of complexity theoretical assumptions.

In Section~\ref{maj} we  construct  a symmetric majority operation that preserves the remainder of the cacti, i.e. the ones without the induced forbidden subgraphs. Then we prove that the majority is indeed edge-preserving, and  extend it to forests of cacti. 
This majority function constructed is,  one might say, a bit prickly. To define it, we consider different components of a graph, the reflexive component of which one might think of as the ``body'' of the graph and the irreflexive parts as ``limbs'' that stick out from the body of the graph in all possible directions (like an Hecatoncheires).
The majority is then constructed depending on the position of the vertices in the graph. For the vertices only in the body the majority behaves just like the median, which was the known majority function for reflexive interval graphs; when the three  vertices to which we are applying the function  are all in  different limbs (or in the different limbs and the body) the  result of the majority function, just like the median,  will be in the body; and when two or three of the vertices that we are applying the function to  are in the same limb we construct a majority by expanding a known majority based on the vertices' locations with respect to each other, giving four categories (Case 1-Case 4) of analysis. From a bird's eye view the majority function projects far apart vertices in the limbs to the body of the graph.  The proof that the majority we define is indeed edge-preserving is definitely prickly, as we need to address each case individually and go through the analysis of many subcases. To make it easier for the reader to follow the argument we break the proof into may subsections, where each subsection zooms in on a different placement of the vertices within the limbs of the graph and looks at all possible behaviours of the adjacent vertices.

\section{Cacti omitting majority}\label{omitting}

In this section we start by showing that if a  cactus is loop-disconnected then it is not preserved by a majority operation. We then show that cacti  containing any of the induced subgraphs mentioned in Theorem~\ref{theomajority} are  not preserved by a $k$-NU operation for  any $k$. Our proofs do not rely on complexity theoretical assumptions.

\newcommand{\cyl}{\ensuremath{\mathrm{Cyl}}}

\subsection{Loop-disconnected cactus}

We note that, under complexity theoretic assumptions, it  is already known that loop-disconnected pseudo-forests cannot be preserved by a majority polymorphism, as  in \cite{FederHJKN10} the authors show that the retraction problem of a pseudo-forest $H$,  $RET(H)$, is {\bf{NP}}-complete if there is a connected component $H'$ in $H$
such that the looped vertices in $H'$ induce a disconnected graph.  

We avoid the theoretical complexity assumption, by adapting \cite[Proposition 2]{martin2011qcsp} to show that loop-disconnected cacti are not preserved by a $k$-NU operation for any $k$. As the arity does not play a role in the next result we omit it. This will also be the case in what follows, so when saying NU we mean $k$-NU for any $k\ge 3$.

\begin{prop}
    
Let $H$ be a cactus graph that is loop-disconnected, then $H$ does not admit an NU polymorphism.
\end{prop}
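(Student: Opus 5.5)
The plan is to produce a pp-definable relation in $H$ that cannot be preserved by any NU operation, and to argue directly from the identities rather than from complexity assumptions, adapting \cite[Proposition 2]{martin2011qcsp}. Fix a $k$-NU polymorphism $f$ of $H$ and suppose the looped vertices of $H$ induce at least two components. Choose looped vertices $a$ and $b$ in different loop components at minimal distance $d$ among all such pairs. The minimality forces $d\geq 2$, and every interior vertex of a shortest $a$--$b$ path $a=v_0,v_1,\dots,v_d=b$ is irreflexive. Moreover, by the cactus property (two cycles share at most one vertex), the set of walks of length $d$ between the loop component $A$ of $a$ and the loop component $B$ of $b$ is very restricted: every such walk runs along a geodesic through a single ``bridge'' region.

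The first step is the staircase in $H^k$. Let $T_j=(b,\dots,b,a,\dots,a)$, with $j$ copies of $b$. To pass from $T_j$ to $T_{j+1}$, move coordinate $j+1$ along $v_0,\dots,v_d$ while every other coordinate sits on a loop. This is a walk in $H^k$, so its image under $f$ is a walk of length $d$ in $H$ from $f(T_j)$ to $f(T_{j+1})$.

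Next I record what the identities give:
\begin{itemize}
\item Since $f$ preserves loops, each $f(T_j)$ is looped.
\item On the first transition, the NU identity gives $f(v_i,a,\dots,a)=a$ for every $i$; on the last it gives $f(b,\dots,b,v_i)=b$.
\item $f(T_0)=a\in A$ and $f(T_k)=b\notin A$.
\end{itemize}
Hence there is some $j$ with $f(T_j)\in A$ and $f(T_{j+1})$ in a different loop component. The image walk between them then has length exactly $d$, so by minimality of $d$ it is a geodesic whose interior is irreflexive.

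The final step is to derive a contradiction at this critical $j$, and this is where I expect the main difficulty. I would first try to run a second, perturbed walk in $H^k$ between the same endpoints $T_j$ and $T_{j+1}$: keep one already-placed $b$-coordinate (or one $a$-coordinate) also moving, say stepping $b\to v_{d-1}\to b$ in lockstep, or stall the moving coordinate by bouncing $v_{i}\to v_{i+1}\to v_i$. Either change produces an image walk of length $d+2$, or one passing through a tuple whose image is forced to be looped. I would then use the cactus structure to show that $H$ has no walk of the required shape joining a vertex of $A$ to a vertex outside $A$. A looped vertex in the interior would contradict minimality of $d$, and the only alternatives would require two cycles sharing an edge.

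If the direct walk-chasing becomes unwieldy, my fallback is Baker--Pixley. A $k$-NU forces every pp-definable relation to be determined by its $(k-1)$-ary projections. I would apply this to the pp-definable relation $R(x_0,\dots,x_d)$ stating that $x_0\cdots x_d$ is a walk with $E(x_0,x_0)$ and $E(x_d,x_d)$. Its small projections can be realised by mixing ``$A$-side'' and ``$B$-side'' walks, so that they admit a tuple that glues into a walk illegally crossing between $A$ and $B$.
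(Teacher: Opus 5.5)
\textbf{There is a genuine gap: the final step is never carried out, and the plan you sketch for it cannot work.} Your set-up is sound up to the critical index. That is: minimal $d\ge 2$, unlooped interior of $v_0\cdots v_d$, looped values $f(T_j)$, and an image geodesic at a transition leaving $A$. But the proposal stops exactly where the contradiction has to be produced.

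A second walk between the \emph{same} endpoints $T_j,T_{j+1}$ can never give a contradiction if you use only edge preservation and the fact that looped tuples have looped images. The reason is that the projection $\pi_{j+1}$ onto coordinate $j+1$ is itself a polymorphism of $H$, with $\pi_{j+1}(T_j)=a\in A$ and $\pi_{j+1}(T_{j+1})=b\notin A$. It maps every such perturbed walk in $H^k$ to a walk of exactly the same shape in $H$, so the claim that ``$H$ has no walk of the required shape'' is false. Any contradiction must invoke a further near-unanimity identity.

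The Baker--Pixley fallback does not help either. Your relation $R$ has arity $d+1$, so for $k\ge d+2$ the statement that $R$ is determined by its $(k-1)$-ary projections is vacuous. In any case $R$ legitimately contains the crossing walk $(v_0,\dots,v_d)$, so there is no ``illegal'' tuple to glue together.

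\textbf{Missing idea.} You need a second walk to a \emph{different}, NU-forced endpoint. This gives two adjacent tuples whose images are forced to be unlooped neighbours of one looped vertex. The paper's proof is built on this kind of configuration: $r=n(p,\ldots,p,p+1,q)$ and $r'=n(p,\ldots,p+1,p,q)$ are adjacent to each other and to $p$, and the cactus property is applied to the edge $rr'$. In your notation it goes as follows.
\begin{enumerate}
\item Let $j$ be minimal with $\beta:=f(T_{j+1})\notin A$. Since $f(T_1)=a$, we have $j\ge 1$ and $\alpha:=f(T_j)\in A$.
\item Starting from $T_{j+1}$, walk~1 moves coordinate $j+1$ along $v_d,\dots,v_0$, ending at $T_j$. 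Walk~2 moves coordinates $1,\dots,j$ simultaneously along $v_d,\dots,v_0$, ending at $(a,\dots,a,b,a,\dots,a)$, whose image is $a$ by the NU identity.
\item Both images are walks of length $d$ from $\beta$ into $A$. By minimality of $d$ they are geodesics with unlooped interiors.
\item The first tuples of the two walks, $(b,\dots,b,v_{d-1},a,\dots,a)$ and $(v_{d-1},\dots,v_{d-1},b,a,\dots,a)$, are adjacent in $H^k$. So their images $r,r'$ are adjacent, unlooped (hence distinct) neighbours of $\beta$, and $\beta r r'$ is a triangle.
\item Follow geodesic~1 from $r$ to $\alpha$, then a path inside $A$ from $\alpha$ to $a$, then geodesic~2 back to $r'$. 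This walk avoids $\beta$ and meets $r$, $r'$ only at its ends, because a vertex at distance $1$ from $\beta$ can only sit in position $1$ of a geodesic from $\beta$.
\item This walk therefore contains an $r$--$r'$ path of length at least $2$. Together with the edge $rr'$ it forms a cycle sharing two vertices with the triangle, contradicting the cactus property.
\end{enumerate}

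Starting both walks from the $\beta$ side, with the block move in walk~2 landing on a near-unanimous tuple, is what puts $r$ and $r'$ within distance $d-1$ of $A$ for every arity $k$. That is what forces them off the loops. Adjacency to a looped vertex together with a walk of length $d$ to the other side would not be enough.
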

\begin{proof}
Let $H$ be a connected cactus graph and let $p, p+1, \ldots, q$ be the shortest loop-disconnected path in $H$, call this $P$. That is, $p, q$ are reflexive vertices, and $p+1, p+2, \ldots, q-1$ are irreflexive vertices, with $(p, p+1), (p+1, p+2), \ldots, (q-1, q)$ undirected edges of $H$. Note that $p$ may have some neighbours, say  $s$,  distinct from $p+1$, but there can never be a path from $s$ to $q$ that is shorter than $q-p-1$.

Assume, for a contradiction, that $H$ is preserved by an NU operation $n$ of any arity. We have that $n(p,\ldots,p, q)=p$
and $n(p,\ldots,p, p+1, q)$ and $n(p, \ldots,p, p+1, p, q)$ are both adjacent to $p$ and to each other. Let $n(p,\ldots,p, p+1, q)=r$, and $n(p, \ldots,p+1, p, q)=r'$. Note that these cannot both be equal to $p+1$, as $p+1$ is irreflexive.

There is a path of length at most $q-p$ from $r$ to $q=n(q,\ldots ,q,p,q)$ (or $r'$ to $q=n(q,\ldots ,p,q,q)$). As $d(r,q)\leq q-p$ and $d(r',q)\leq q-p$, if $r\neq r'$ then, assuming that $r$ is on the path  $P$, there is a different path from $r'$ to $q$ that is not via $r$. Hence, the edge $(r,r')$ is in two cycles, a contradiction.

Thus, $r=r'$, but $r$ is an irreflexive vertex, a contradiction.

\end{proof}

\subsection{Big cycles}

The arguments in this section come from constructions underlying the work of Feder, Hell and Huang \cite{FederH98,FederHH99}. Let us note that the absence of an NU polymorphism for the graphs of this section already follows modulo complexity-theoretic assumptions from \cite{FederHJKN10}. Our task is to make the proof independent of those assumptions.

Let $H$ be a cycle of length $k \geq 3$, which may contain looped vertices. Suppose $V(H)=\{0,\ldots,k-1\}$. Define the ``cylinder'' graph $\cyl_m(H)$ to be built from $m$ copies of the underlying irreflexive graph of $H$, indexed from $0$ to $m-1$, with an edge between vertex $j$ of the $i^{th}$ copy and vertex $j$ of the $(i+1)^{th}$ copy, for all $i \in \{0,\ldots,m-1\},j \in \{0,\ldots,m-2\}$. 

In graph theoretical terms, $\cyl_m(H)$ is the Cartesian product of the path on $m$ vertices $P_m$  with the underlying irreflexive $H$ (and the coordinates will be in that order). Call the copy of $H$ with index $0$ in $\cyl_m(H)$ the \emph{base copy}, $H_0$, and the copy with index $m-1$ the \emph{top copy}.

\subsubsection{Odd cycle with at least one vertex loopless}
\label{sec:odd-cycle-with-at-least-one-vertex-loopless}

\begin{figure}
\begin{center}
    %
%
\setlength{\unitlength}{3947sp}%
\begingroup\makeatletter\ifx\SetFigFont\undefined%
\gdef\SetFigFont#1#2#3#4#5{%
  \reset@font\fontsize{#1}{#2pt}%
  \fontfamily{#3}\fontseries{#4}\fontshape{#5}%
  \selectfont}%
\fi\endgroup%
\begin{picture}(4171,1199)(810,-709)
{\color[rgb]{0,0,0}\thinlines
\put(4662,-269){\circle*{76}}
}%
{\color[rgb]{0,0,0}\put(4183,-241){\circle*{76}}
}%
{\color[rgb]{0,0,0}\put(4043,220){\circle*{76}}
}%
{\color[rgb]{0,0,0}\put(4503,445){\circle*{76}}
}%
{\color[rgb]{0,0,0}\put(4935,163){\circle*{76}}
}%
{\color[rgb]{0,0,0}\put(1475,-269){\circle*{76}}
}%
{\color[rgb]{0,0,0}\put(996,-241){\circle*{76}}
}%
{\color[rgb]{0,0,0}\put(856,220){\circle*{76}}
}%
{\color[rgb]{0,0,0}\put(1316,445){\circle*{76}}
}%
{\color[rgb]{0,0,0}\put(1748,163){\circle*{76}}
}%
{\color[rgb]{0,0,0}\put(3055,-277){\circle*{76}}
}%
{\color[rgb]{0,0,0}\put(2576,-249){\circle*{76}}
}%
{\color[rgb]{0,0,0}\put(2436,212){\circle*{76}}
}%
{\color[rgb]{0,0,0}\put(2896,437){\circle*{76}}
}%
{\color[rgb]{0,0,0}\put(3328,155){\circle*{76}}
}%
{\color[rgb]{0,0,0}\put(4071,247){\line( 5, 2){428.621}}
}%
{\color[rgb]{0,0,0}\multiput(4041,223)(2.65439,-7.96316){58}{\makebox(1.6667,11.6667){\tiny.}}
\put(4189,-232){\line( 6,-1){468.486}}
\put(4663,-277){\line( 3, 5){269.559}}
\put(4933,172){\line(-3, 2){432.692}}
}%
{\color[rgb]{0,0,0}\put(884,247){\line( 5, 2){428.621}}
}%
{\color[rgb]{0,0,0}\put(1306,463){\line( 1, 0){348}}
}%
{\color[rgb]{0,0,0}\put(1720,191){\line( 1, 0){338}}
\put(2058,191){\line(-1, 0){141}}
}%
{\color[rgb]{0,0,0}\put(996,-232){\line( 6, 1){329.351}}
}%
{\color[rgb]{0,0,0}\put(1466,-260){\line( 5, 1){290.769}}
}%
{\color[rgb]{0,0,0}\put(846,229){\line( 6, 1){326.108}}
}%
{\color[rgb]{0,0,0}\multiput(854,223)(2.65439,-7.96316){58}{\makebox(1.6667,11.6667){\tiny.}}
\put(1002,-232){\line( 6,-1){468.486}}
\put(1476,-277){\line( 3, 5){269.559}}
\put(1746,172){\line(-3, 2){432.692}}
}%
{\color[rgb]{0,0,0}\put(2464,239){\line( 5, 2){428.621}}
}%
{\color[rgb]{0,0,0}\put(2886,455){\line( 1, 0){348}}
}%
{\color[rgb]{0,0,0}\put(3300,183){\line( 1, 0){338}}
\put(3638,183){\line(-1, 0){141}}
}%
{\color[rgb]{0,0,0}\put(2576,-240){\line( 6, 1){329.351}}
}%
{\color[rgb]{0,0,0}\put(3046,-268){\line( 5, 1){290.769}}
}%
{\color[rgb]{0,0,0}\put(2426,221){\line( 6, 1){326.108}}
}%
{\color[rgb]{0,0,0}\multiput(2434,215)(2.65439,-7.96316){58}{\makebox(1.6667,11.6667){\tiny.}}
\put(2582,-240){\line( 6,-1){468.486}}
\put(3056,-285){\line( 3, 5){269.559}}
\put(3326,164){\line(-3, 2){432.692}}
}%
\put(2618,-649){\makebox(0,0)[lb]{\smash{{\SetFigFont{8}{9.6}{\rmdefault}{\mddefault}{\updefault}{\color[rgb]{0,0,0}Copy k+1}%
}}}}
\put(4260,-617){\makebox(0,0)[lb]{\smash{{\SetFigFont{8}{9.6}{\rmdefault}{\mddefault}{\updefault}{\color[rgb]{0,0,0}Copy 0}%
}}}}
\put(2445,-219){\makebox(0,0)[lb]{\smash{{\SetFigFont{8}{9.6}{\rmdefault}{\mddefault}{\updefault}{\color[rgb]{0,0,0}s}%
}}}}
\put(848,-226){\makebox(0,0)[lb]{\smash{{\SetFigFont{8}{9.6}{\rmdefault}{\mddefault}{\updefault}{\color[rgb]{0,0,0}t}%
}}}}
\put(1085,-656){\makebox(0,0)[lb]{\smash{{\SetFigFont{8}{9.6}{\rmdefault}{\mddefault}{\updefault}{\color[rgb]{0,0,0}Copy 2k}%
}}}}
\put(2169,281){\makebox(0,0)[lb]{\smash{{\SetFigFont{8}{9.6}{\rmdefault}{\mddefault}{\updefault}{\color[rgb]{0,0,0}...}%
}}}}
\put(2162,-14){\makebox(0,0)[lb]{\smash{{\SetFigFont{8}{9.6}{\rmdefault}{\mddefault}{\updefault}{\color[rgb]{0,0,0}...}%
}}}}
\put(2162,-14){\makebox(0,0)[lb]{\smash{{\SetFigFont{8}{9.6}{\rmdefault}{\mddefault}{\updefault}{\color[rgb]{0,0,0}...}%
}}}}
\put(3746,-59){\makebox(0,0)[lb]{\smash{{\SetFigFont{8}{9.6}{\rmdefault}{\mddefault}{\updefault}{\color[rgb]{0,0,0}...}%
}}}}
\put(3759,281){\makebox(0,0)[lb]{\smash{{\SetFigFont{8}{9.6}{\rmdefault}{\mddefault}{\updefault}{\color[rgb]{0,0,0}...}%
}}}}
\end{picture}%
\end{center}
    \caption{The two cylinder construction of $\cyl^+(H)$ from Section~\ref{sec:odd-cycle-with-at-least-one-vertex-loopless}. All vertices are loopless.}
    \label{fig:C5s}
\end{figure}

\noindent If $k\geq 5$ is odd, and $H$ contains at least one loopless vertex, let $\cyl^+(H)$ be two cylinders $\cyl_{k+1}(H)$ and $\cyl_{k-1}(H)$ joined (vertically) in series. That is, the base copy of $H$ in the upper cylinder is identified with the top copy of $H$ in the lower cylinder. Label vertex $(0,0)$ in the upper cylinder (i.e. $(k,0)$ in the lower cylinder) as $s$ and label vertex $(k-2,0)$ in the upper cylinder as $t$. Let $H_0$ refer to the base copy of $H$ in the lower cylinder. The shortest path between $s$ and $t$ is of length $k-2$. This gadget is drawn in Figure~\ref{fig:C5s}, and is rotated 90 degrees counter clockwise for sake of space.
\begin{lemma}
    Let $H$ be a  cycle of odd length $k \geq 5$ with at least one loopless vertex.  Let $f$ range over all homomorphisms from $\cyl^+(H)$ to $H$ such that $f(H_0)$ is the identity. Then the orbit of $(s,t)$ in $V(H)^2$ is precisely $s \neq t \in \{0,\ldots,k-1\}$.
\label{lem:no-majority-odd-not-reflexive}
\end{lemma}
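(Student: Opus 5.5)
The plan is to show that every homomorphism $f$ from $\cyl^+(H)$ to $H$ that is the identity on $H_0$ acts on each copy of $H$ as a rotation $j\mapsto j+r_i \pmod k$. We then show that consecutive rotation amounts differ by exactly $\pm1$, and that every such sequence of rotations is realised by some $f$. Once this is in place, the orbit of $(s,t)$ comes down to arithmetic with sums of $\pm1$ steps modulo $k$. I index the copies of the combined cylinder consecutively from $0$ (the base $H_0$) to $2k-1$. Then $s=(k,0)$ in the lower-cylinder coordinates, i.e. it lies $k$ levels above $H_0$, and $t$ lies $k-2$ levels above $s$ in the upper cylinder.

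\emph{Step 1: the structure of $f$ on each copy.} The restriction of $f$ to copy $i$ is a closed walk $W_i$ of length $k$ in $H$; such a walk may use loops. I would attach to each $W_i$ its winding number around the cycle $H$. The four-cycles $(i,j),(i,j+1),(i+1,j+1),(i+1,j)$ of the cylinder map to closed walks of length $4<k$ in $H$, and these are null-homotopic in the cycle. So $W_i$ and $W_{i+1}$ are homotopic and have the same winding number. Since $W_0$ is the identity, every $W_i$ has winding number $1$. A closed walk of length exactly $k$ with winding number $1$ must step $+1$ at every step: a loop step or a backward step would leave fewer than $k$ net forward steps. Hence $W_i$ is a rotation by some $r_i$, with $r_0=0$.

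\emph{Step 2: consecutive rotations differ by $\pm1$.} The vertical edges force $j+r_{i+1}$ to be adjacent to $j+r_i$ for every $j$. So $r_{i+1}-r_i\in\{1,-1,0\}\pmod k$, and the value $0$ is possible only if every vertex is looped. This is excluded because $H$ has a loopless vertex. Hence $r_{i+1}=r_i\pm1$. Conversely, any sequence $r_0=0$, $r_{i+1}=r_i\pm1$ defines a homomorphism: horizontal edges go to edges because each copy is mapped by a rotation, and vertical edges go to edges because consecutive rotations differ by one.

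\emph{Step 3: computing the orbit.} We have $f(s)=r_k$ and $f(t)=r_{2k-2}$, so $f(t)-f(s)$ is a sum of $k-2$ terms $\pm1$. This is an odd integer of absolute value at most $k-2$, hence never $\equiv 0 \pmod k$, which gives $f(s)\neq f(t)$. For the converse, $r_k$ ranges over the even integers in $[-k,k]$, and since $k$ is odd these cover every residue mod $k$. The difference ranges over the odd integers in $[-(k-2),k-2]$, which cover every nonzero residue mod $k$ for $k\ge5$ odd. The two choices use disjoint sets of steps and can be made independently, so every pair $s\neq t$ arises.

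The main obstacle is Step 1, because loops in $H$ allow non-bijective closed walks of length $k$. The homotopy/winding argument is the tool I would rely on to rule these out. It needs care to check that loop steps contribute $0$ to the winding number and that $4$-walks cannot wrap around because $k\ge5$.
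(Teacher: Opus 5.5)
Your proof is correct and follows the same route as the paper: each layer maps by a rotation, the loopless vertex forces consecutive rotations to differ by $\pm1$, and counting $\pm1$ steps over $k$ and $k-2$ levels gives the orbit. Your winding-number argument rigorously justifies a step the paper only asserts, namely that the image of the vertex above the preimage of $0$ fixes the whole next layer; with loops present this is forced by the layer closing up, not by local propagation. Your Step~3 also makes explicit the joint realisability of every pair $s\neq t$, which the paper leaves implicit.

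One harmless slip: $r_k$ is a sum of $k$ terms $\pm1$ with $k$ odd, so it ranges over the odd (not even) integers in $[-k,k]$. These still cover every residue modulo $k$, so your conclusion stands.
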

\begin{proof}
Without loss of generality, let $0$ be loopless. Consider the copies of $H$ in $\cyl^+(H)$ from the base copy to the top copy. Let us commence by showing that, if the base copy maps according to the identity, then each successive copy of $H$ maps under $f$ to $H$ according to some automorphism of the irreflexive graph underlying $H$ (namely $C_k$). Our argument will be inductive. The base copy is the base case and holds by assumption. Suppose it holds for the $i^{th}$ copy, and imagine that in the $i^{th}$ copy the vertex $(j,i)$ maps to $0$. In the $(i+1)^{th}$ copy, vertex $(j,i+1)$ must map either to $1$ or $k-1$. Moreover, once a map is chosen for this vertex, it will set the position for all of the remaining vertices from this copy. If $f(j,i+1)=1$, the cycle in layer $i+1$ will advance one turn clockwise relative to the cycle in layer $i$; otherwise it will advance one turn anticlockwise.

The fact that $s$, and $t$,  may individually attain any vertex of $H$ now follows from the fact that there is a walk of length $k$ between any distinct vertices of $V(H)$. The fact that $s$ and $t$ cannot map to the same vertex follows from the fact that there is no walk of length $k-2$ between a vertex and itself in the irreflexive graph underlying $H$, namely $C_k$. 
\end{proof}

\subsubsection{Even cycle with at least one vertex loopless}
\label{sec:even-cycle-with-at-least-one-vertex-loopless}

\begin{figure}
\begin{center}
    %
%
\setlength{\unitlength}{3947sp}%
\begingroup\makeatletter\ifx\SetFigFont\undefined%
\gdef\SetFigFont#1#2#3#4#5{%
  \reset@font\fontsize{#1}{#2pt}%
  \fontfamily{#3}\fontseries{#4}\fontshape{#5}%
  \selectfont}%
\fi\endgroup%
\begin{picture}(4245,2260)(366,-1530)
{\color[rgb]{0,0,0}\thinlines
\put(873,599){\circle*{76}}
}%
{\color[rgb]{0,0,0}\put(412,346){\circle*{76}}
}%
{\color[rgb]{0,0,0}\put(891,-255){\circle*{76}}
}%
{\color[rgb]{0,0,0}\put(431,-11){\circle*{76}}
}%
{\color[rgb]{0,0,0}\put(1361, 17){\circle*{76}}
}%
{\color[rgb]{0,0,0}\put(1351,374){\circle*{76}}
}%
{\color[rgb]{0,0,0}\put(404,344){\line( 0,-1){350}}
\put(416, -6){\line( 2,-1){475.600}}
\put(888,-251){\line( 5, 3){462.500}}
\put(1349, 29){\line( 0, 1){350}}
\put(1349,379){\line( 0, 1){  0}}
\put(1349,379){\line(-2, 1){471.200}}
\put(871,601){\line(-5,-3){459.412}}
}%
{\color[rgb]{0,0,0}\put(2477,-631){\circle*{76}}
}%
{\color[rgb]{0,0,0}\put(2016,-884){\circle*{76}}
}%
{\color[rgb]{0,0,0}\put(2495,-1485){\circle*{76}}
}%
{\color[rgb]{0,0,0}\put(2035,-1241){\circle*{76}}
}%
{\color[rgb]{0,0,0}\put(2965,-1213){\circle*{76}}
}%
{\color[rgb]{0,0,0}\put(2955,-856){\circle*{76}}
}%
{\color[rgb]{0,0,0}\put(2008,-886){\line( 0,-1){350}}
\put(2020,-1236){\line( 2,-1){475.600}}
\put(2492,-1481){\line( 5, 3){462.500}}
\put(2953,-1201){\line( 0, 1){350}}
\put(2953,-851){\line( 0, 1){  0}}
\put(2953,-851){\line(-2, 1){471.200}}
\put(2475,-629){\line(-5,-3){459.412}}
}%
\put(1383,-589){\makebox(0,0)[lb]{\smash{{\SetFigFont{8}{9.6}{\rmdefault}{\mddefault}{\updefault}{\color[rgb]{0,0,0}.}%
}}}}
\put(1447,-630){\makebox(0,0)[lb]{\smash{{\SetFigFont{8}{9.6}{\rmdefault}{\mddefault}{\updefault}{\color[rgb]{0,0,0}.}%
}}}}
\put(1529,-677){\makebox(0,0)[lb]{\smash{{\SetFigFont{8}{9.6}{\rmdefault}{\mddefault}{\updefault}{\color[rgb]{0,0,0}.}%
}}}}
\put(1604,-438){\makebox(0,0)[lb]{\smash{{\SetFigFont{8}{9.6}{\rmdefault}{\mddefault}{\updefault}{\color[rgb]{0,0,0}.}%
}}}}
\put(1668,-479){\makebox(0,0)[lb]{\smash{{\SetFigFont{8}{9.6}{\rmdefault}{\mddefault}{\updefault}{\color[rgb]{0,0,0}.}%
}}}}
\put(1750,-526){\makebox(0,0)[lb]{\smash{{\SetFigFont{8}{9.6}{\rmdefault}{\mddefault}{\updefault}{\color[rgb]{0,0,0}.}%
}}}}
\put(3168,-414){\makebox(0,0)[lb]{\smash{{\SetFigFont{8}{9.6}{\rmdefault}{\mddefault}{\updefault}{\color[rgb]{0,0,0}.}%
}}}}
\put(3104,-455){\makebox(0,0)[lb]{\smash{{\SetFigFont{8}{9.6}{\rmdefault}{\mddefault}{\updefault}{\color[rgb]{0,0,0}.}%
}}}}
\put(3022,-502){\makebox(0,0)[lb]{\smash{{\SetFigFont{8}{9.6}{\rmdefault}{\mddefault}{\updefault}{\color[rgb]{0,0,0}.}%
}}}}
\put(3406,-573){\makebox(0,0)[lb]{\smash{{\SetFigFont{8}{9.6}{\rmdefault}{\mddefault}{\updefault}{\color[rgb]{0,0,0}.}%
}}}}
\put(3342,-614){\makebox(0,0)[lb]{\smash{{\SetFigFont{8}{9.6}{\rmdefault}{\mddefault}{\updefault}{\color[rgb]{0,0,0}.}%
}}}}
\put(3260,-661){\makebox(0,0)[lb]{\smash{{\SetFigFont{8}{9.6}{\rmdefault}{\mddefault}{\updefault}{\color[rgb]{0,0,0}.}%
}}}}
{\color[rgb]{0,0,0}\put(4021,604){\circle*{76}}
}%
{\color[rgb]{0,0,0}\put(3560,351){\circle*{76}}
}%
{\color[rgb]{0,0,0}\put(4039,-250){\circle*{76}}
}%
{\color[rgb]{0,0,0}\put(3579, -6){\circle*{76}}
}%
{\color[rgb]{0,0,0}\put(4509, 22){\circle*{76}}
}%
{\color[rgb]{0,0,0}\put(4499,379){\circle*{76}}
}%
{\color[rgb]{0,0,0}\put(3552,349){\line( 0,-1){350}}
\put(3564, -1){\line( 2,-1){475.600}}
\put(4036,-246){\line( 5, 3){462.500}}
\put(4497, 34){\line( 0, 1){350}}
\put(4497,384){\line( 0, 1){  0}}
\put(4497,384){\line(-2, 1){471.200}}
\put(4019,606){\line(-5,-3){459.412}}
}%
{\color[rgb]{0,0,0}\put(427, -7){\line( 1,-1){253}}
}%
{\color[rgb]{0,0,0}\put(876,-269){\line( 1,-1){230}}
}%
{\color[rgb]{0,0,0}\put(1342, 11){\line( 6,-5){247.869}}
}%
{\color[rgb]{0,0,0}\put(1371,361){\line( 1,-1){225}}
}%
{\color[rgb]{0,0,0}\put(864,588){\line( 6,-5){269.803}}
}%
{\color[rgb]{0,0,0}\put(398,314){\line( 1,-1){256}}
}%
{\color[rgb]{0,0,0}\multiput(4514, 69)(-4.69418,-7.04127){42}{\makebox(1.6667,11.6667){\tiny.}}
}%
{\color[rgb]{0,0,0}\multiput(4479,361)(-5.06927,-6.75902){42}{\makebox(1.6667,11.6667){\tiny.}}
}%
{\color[rgb]{0,0,0}\put(4001,629){\line(-3,-5){220.147}}
}%
{\color[rgb]{0,0,0}\multiput(3570, -1)(-5.43560,-6.52272){36}{\makebox(1.6667,11.6667){\tiny.}}
}%
{\color[rgb]{0,0,0}\multiput(4036,-234)(-4.63801,-6.95701){35}{\makebox(1.6667,11.6667){\tiny.}}
}%
{\color[rgb]{0,0,0}\multiput(3558,326)(-5.25511,-6.56889){38}{\makebox(1.6667,11.6667){\tiny.}}
}%
{\color[rgb]{0,0,0}\put(1377,378){\line( 1, 0){2163}}
}%
\put(1424,437){\makebox(0,0)[lb]{\smash{{\SetFigFont{8}{9.6}{\rmdefault}{\mddefault}{\updefault}{\color[rgb]{0,0,0}s}%
}}}}
\put(4596,-47){\makebox(0,0)[lb]{\smash{{\SetFigFont{8}{9.6}{\rmdefault}{\mddefault}{\updefault}{\color[rgb]{0,0,0}t}%
}}}}
\put(3505,448){\makebox(0,0)[lb]{\smash{{\SetFigFont{8}{9.6}{\rmdefault}{\mddefault}{\updefault}{\color[rgb]{0,0,0}s'}%
}}}}
\end{picture}%
\end{center}
    \caption{The two cylinder construction of $\cyl^+(H)$ from Section~\ref{sec:even-cycle-with-at-least-one-vertex-loopless}. All vertices are loopless.}
    \label{fig:C6s}
\end{figure}

If $k\geq 6$ is even, let $\cyl^+(H)$ be two copies of $\cyl_{k+1}(H)$ identified on their bottom copies with a path $P_{(k/2)-1}$ of length $(k/2)-2$ added joining the vertices labelled $(k-1,0)$ in the two top copies of $H$ in $\cyl^+(H)$. One end  of this path is labelled $s$ and the other is labelled $s'$. Now, let $t$ be the vertex on the same cycle as $s'$ but directly opposite it. This gadget is drawn in Figure~\ref{fig:C6s}. Let $\Pi(k/2)$ be the subset of elements of $\{0,\ldots,k-1\}$ with the same parity as $k/2$.
\begin{lemma}
    Let $H$ be a cycle of even length $k \geq 6$ with at least one irreflexive vertex.  Let $f$ range over all homomorphisms from $\cyl(H)$ to $H$ so that $f(H_0)$ is the identity. Then the orbit of $(s,t)$ in $V(H)^2$ restricted to $\Pi(k/2)^2$ is precisely $s \neq t \in \Pi(k/2)$.
\label{lem:no-majority-even-not-reflexive}
\end{lemma}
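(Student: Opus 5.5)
The plan mirrors the proof of Lemma~\ref{lem:no-majority-odd-not-reflexive}. Without loss of generality the vertex $0$ of $H$ is loopless.

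First, I show that every copy of $H$ in either of the two cylinders $\cyl_{k+1}(H)$ maps under $f$ by an automorphism of the underlying irreflexive cycle $C_k$, namely a rotation or a reflection. This is an induction up each cylinder, starting from the common base copy $H_0$, which $f$ maps by the identity. Suppose copy $i$ maps by an automorphism. Each vertex $(j,i+1)$ is adjacent to $(j,i)$ and to its two cycle-neighbours in copy $i+1$. Choose the $j$ with $f(j,i)=0$. Since $0$ is loopless, $f(j,i+1)\in\{1,k-1\}$, and that choice propagates around copy $i+1$. I also need the fact that a $k$-cycle with $k\ge 6$ cannot map onto $H$ non-injectively while following the previous copy, which the $0$-vertex argument ensures. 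Consequently, with respect to the base, each step rotates the image of the cycle by $\pm1$, and possibly reflects it.

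It follows that the image of the vertex $(k-1,0)$ in the top copy of each cylinder can be any vertex reachable from $k-1$ by a walk of length $k$ in $C_k$. Since $k$ is even, these are exactly the vertices with the parity of $k-1$, i.e.\ of $k$ shifted by one. The two cylinders are independent apart from their shared base copy. Hence $f(s)$ and $f(s')$ range independently over this parity class, subject only to the connecting path of length $k/2-2$ between them. The vertex $t$ is antipodal to $s'$ in its copy, and that copy maps by an automorphism, so $f(t)$ is antipodal to $f(s')$, i.e.\ $f(t)=f(s')+k/2$.

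Next I restrict to $f(s),f(t)\in\Pi(k/2)$ and compute the orbit. The first constraint is the path: there must be a walk of length $k/2-2$ in $C_k$ from $f(s)$ to $f(s')=f(t)+k/2$. Equivalently, $f(s)$ and $f(t)$ lie in the same parity class, and the distance from $f(s)$ to the antipode of $f(t)$ is at most $k/2-2$.
\begin{itemize}
\item To exclude $f(s)=f(t)$: the antipode of $f(s)$ is at distance exactly $k/2>k/2-2$ from $f(s)$.
\item To realise every $f(s)\neq f(t)$ in $\Pi(k/2)$: such a pair differs by an even nonzero amount. Then the distance from $f(s)$ to $f(t)+k/2$ is at most $k/2-2$ and has the right parity. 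The path can then be mapped as a walk, padding back and forth if the distance is smaller. Finally, choose the rotations in each cylinder to hit the required $f(s)$ and $f(s')$.
\end{itemize}

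The main obstacle is the parity bookkeeping. I must check that the number of copies ($k+1$), the labelling of $s$, $s'$ and $t$, and the path length $k/2-2$ combine so that the reachable endpoints land in $\Pi(k/2)$, rather than merely in the complementary parity class. If the parities turn out to be off by one, I would recheck which copy index is the top copy and absorb the discrepancy there. In any case, the exclusion of $f(s)=f(t)$ rests solely on the antipodal distance $k/2$ exceeding the path length, which is robust.
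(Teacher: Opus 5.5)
Your skeleton matches the paper's sketch: each layer maps by a symmetry of $C_k$, $t$ is antipodal to $s'$, and the connecting path is short. Your exclusion of $f(s)=f(t)$ is correct, and it survives loops, because the path is really mapped into $H$ rather than $C_k$ and loops do not shorten distances. On the layer step, the loopless vertex alone does not rule out stalling at looped vertices. What does rule it out is this: the displacement $f(j,i{+}1)-f(j,i)\in\{-1,0,1\}$ changes by an amount in $\{-2,-1,0\}$ along each edge of the layer. These nonpositive changes sum to $0$ around the cycle (for $k\ge 6$ they can be read as integers), so the displacement is constant, and it is nonzero at the preimage of the loopless vertex. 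Hence layer $i$ maps by the rotation $j\mapsto j+e_i$ with $e_i\equiv i \pmod 2$, and no reflections occur.

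\emph{The real gap is the parity bookkeeping you deferred, and it cannot be absorbed by re-indexing.}
\begin{itemize}
\item $s$ and $s'$ carry the same label $\ell$ at the same height. So $f(s)\equiv f(s')\equiv \ell$ and $f(t)=f(s')+k/2\equiv \ell+k/2 \pmod 2$, whatever that height is. Requiring $f(s),f(t)\in\Pi(k/2)$ therefore forces both $\ell$ and $k/2$ to be even.
\item With your label $\ell=k-1$ the restricted orbit is empty, so your final step (``choose the rotations in each cylinder to hit the required $f(s)$ and $f(s')$'') fails. For $k\equiv 2\pmod 4$ it is empty for every labelling.
\item Your own two constraints already clash when $k/2$ is odd: $f(s)\equiv f(s')$ from the cylinders, and $f(s)\equiv f(t)$ from a path inside $C_k$. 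For irreflexive $C_6$ the path is a single edge joining two vertices on the same side of the bipartite double cylinder, so no homomorphism exists. With a single loop at $3$, the orbit is just $\{(3,0)\}$ for odd $\ell$.
\end{itemize}

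What your argument does establish is the case $k\equiv 0\pmod 4$ with $\ell$ even. Using $d(x,y+k/2)=k/2-d(x,y)$ in $C_k$, the path condition $d(f(s),f(s'))\le k/2-2$ becomes $d(f(s),f(t))\ge 2$, i.e.\ $f(s)\neq f(t)$. Loops add nothing there, since the parities match.

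For $k\equiv 2\pmod 4$ the gadget itself must change. For example, make the cylinder carrying $s'$ and $t$ one layer taller and take $\ell$ of the parity of $k/2$. Then $f(s')$ flips parity, $f(t)$ returns to the class of $f(s)$, and the odd path length is consistent. The same distance identity then yields exactly disequality on $\Pi(k/2)$.

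The paper's sketch asserts that $s,t$ map into $\{0,2,\ldots,k-2\}$, which tacitly assumes $k/2$ even as well. So your suspicion was well founded, but a proof has to resolve this rather than leave it open.
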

\begin{proof}
The inductive argument is the same as in the previous lemma. We use the one irreflexive vertex to force that successive copies of the cycle cannot be mapped identically. It follows that $s,t$ must be mapped $\{0,2,\ldots,k-2\}$. Note that $s\neq t$ is enforced directly by the path joining their respective copies of $H$.
\end{proof}

\subsubsection{Fully reflexive cycle}

In these cases, our cylinder construction is very close to that in \cite{FederH98}. However, we will only have one vertex at level $i+1$ attached to each vertex on level $i$ (whereas they have two). The reason for this discrepancy is that their construction works for the reflexive $4$-cycle, whereas ours will not. 

If $k\geq 6$ is even, and $H$ is reflexive, let $\cyl^+(H)$ be $\cyl_k(H)$ with a pendant path $P_{(k/2)}$ of length $(k/2)-1$ added to vertex $(k-1,0)$ in the top copy of $H$ in $\cyl^+(H)$. Label the leaf of this path $t$ and label vertex $(k-1,k/2)$ as $s$.
\begin{lemma}
    Let $H$ be a reflexive cycle of even length $k \geq 6$. Let $f$ range over all homomorphisms from $\cyl^+(H)$ to $H$ so that $f(H_0)$ is the identity. Then the orbit of $(s,t)$ in $V(H)^2$ is precisely $s \neq t$.
\label{lem:no-majority-even-reflexive}
\end{lemma}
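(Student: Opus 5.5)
The plan is to show that every homomorphism $f$ with $f(H_0)$ the identity maps each copy of $H$ in $\cyl_k(H)$ by a rotation of $H$. From that structural fact both inclusions of the orbit will follow. I read the coordinates of $\cyl_m(H)$ as (copy index, cycle position), as in the earlier sections. So $(k-1,0)$ is vertex $0$ of the top copy, $s=(k-1,k/2)$ is the antipodal vertex of the top copy, and $t$ is joined to $(k-1,0)$ by a path of length $(k/2)-1$.

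\textbf{Step 1 (rigidity of copies).} I argue by induction on the copy index that copy $i$ is mapped by a rotation $\rho_{r_i}\colon j\mapsto j+r_i \pmod k$, with $r_0=0$. Suppose copy $i$ maps by $\rho_{r_i}$. Each vertex $(i+1,j)$ is adjacent to $(i,j)$, so $f(i+1,j)$ lies within distance $1$ of $j+r_i$ in the reflexive cycle $H$.

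I then use a winding-number argument. Read the image of the irreflexive $C_k$ in copy $i+1$ as a closed walk in $H$ that is allowed to stall at loops. This walk stays pointwise within distance $1$ of a walk winding once around $H$. Since $k\ge 6$, every distance involved is at most $2<k/2$, so the two walks have the same winding number, namely $\pm 1$ with the same orientation. A closed walk of length $k$ in $C_k$ with winding number $1$ cannot stall or backtrack, so it is a rotation $\rho_{r_{i+1}}$. Applying the distance-$1$ condition at any single vertex then gives $r_{i+1}-r_i\in\{-1,0,1\}$.

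This step is the main obstacle. The bound $k\ge 6$ is used exactly here: for the reflexive $4$-cycle a neighbouring copy could collapse, which matches the remark about the $4$-cycle preceding the lemma. The cleanest route is to write $f(i+1,j)=j+r_i+\epsilon_j$ with $\epsilon_j\in\{-1,0,1\}$. Consecutive vertices of the copy must map to adjacent or equal vertices, which forces $|1+\epsilon_{j+1}-\epsilon_j|\le 1$. Summing around the cycle gives winding number $1$, hence all $\epsilon_j$ are equal.

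\textbf{Step 2 (necessity, $f(s)\neq f(t)$).} Let $r=r_{k-1}$. By Step 1, $f(k-1,0)=r$ and $f(s)=r+k/2$. The image of $t$ is joined to $r$ by a walk of length $(k/2)-1$, so $d(f(t),r)\le (k/2)-1<k/2=d(f(s),r)$. Hence $f(t)\neq f(s)$.

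\textbf{Step 3 (sufficiency).} Given $a\neq b$, set $r=a-k/2$. There are $k-1$ steps between copy $0$ and the top copy, and each step may change the rotation by $-1$, $0$ or $+1$. Every residue mod $k$ has a representative in $\{-(k-1),\dots,k-1\}$, so some choice of steps realises top rotation $r$; taking each copy to be the corresponding rotation gives a homomorphism on $\cyl_k(H)$. Then $f(s)=a$.

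Since $b\neq a=r+k/2$, we have $d(r,b)\le (k/2)-1$. Because every vertex of $H$ is looped, the pendant path can walk from $r$ to $b$ in exactly $(k/2)-1$ steps, stalling on loops as needed. So $t$ can be sent to $b$. Together with Step 2, the orbit of $(s,t)$ is exactly the set of pairs with $s\neq t$.
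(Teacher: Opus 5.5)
Your proof is correct and follows the route the paper intends with its one-line ``similar to the previous lemmas'' proof. You show by induction that each copy is mapped by a rotation whose index changes by $-1$, $0$ or $+1$ between consecutive copies, since all vertices are looped. Then $s$ lands antipodal to the image of $(k-1,0)$, while $t$ lands within distance $(k/2)-1$ of it. Finally, every top rotation and every value $f(t)\neq f(s)$ can be realised, which gives the reverse inclusion. Your $\epsilon_j$ computation, where $k\ge 5$ ensures that a value $1+\epsilon_{j+1}-\epsilon_j\in\{2,3\}$ cannot be congruent to $-1$, $0$ or $1$ modulo $k$, supplies the rigidity step that the paper leaves implicit.
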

\begin{proof}
The argument is similar to that for Lemma~\ref{lem:no-majority-even-not-reflexive} except that now all $s \neq t \in \{0,\ldots,k-1\}$ is attainable.
\end{proof}
If $k\geq 5$ is odd, and $H$ is reflexive, let $\cyl^+(H)$ be $\cyl_k(H)$ with a new vertex $w$ made adjacent to both vertices $(k-1,(k-1)/2)$ and $(k-1,(k+1)/2)$. Now add a pendant path $P_{(k+1)/2-1}$ of length $(k+1)/2-2$ added to the vertex $w$. Label the leaf of this path $t$ and label vertex $(k-1,0)$ as $s$.
\begin{lemma}
    Let $H$ be a reflexive cycle of odd length $k \geq 5$. Let $f$ range over all homomorphisms from $\cyl^+(H)$ to $H$ so that $f(H_0)$ is the identity. Then the orbit of $(s,t)$ in $V(H)^2$ is precisely $s \neq t$.
\label{lem:no-majority-odd-reflexive}
\end{lemma}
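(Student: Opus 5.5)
The plan is to follow the scheme of the earlier cylinder lemmas: first pin down how each layer of $\cyl_k(H)$ can map, then read off where $s$, $w$ and $t$ can go.

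\textbf{Step 1: every layer maps by a rotation.} Write $C_k$ for the irreflexive cycle underlying $H$, and $H_i$ for the $i$-th copy. The restriction of $f$ to $H_i$ is a closed walk of length $k$ in the reflexive cycle $H$. Assign to it a winding number: each step contributes $+1$, $-1$ or $0$ around the cycle, and we sum these.

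Corresponding vertices of $H_i$ and $H_{i+1}$ are adjacent, so their images are equal or adjacent in $H$. Consequently each square $(j,i),(j+1,i),(j+1,i+1),(j,i+1)$ maps to a closed walk of length $4$ in $H$. Because $k\geq 5$, such a walk is null-homotopic: it cannot wrap around the cycle. Summing over the $k$ squares between two layers shows that adjacent layers have the same winding number.

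The base copy $H_0$ maps by the identity, which has winding number $1$. Hence every layer has winding number $1$. A closed walk of length $k$ with winding number $1$ in a $k$-cycle must advance one step in the positive direction at every step. So each layer maps by a rotation $x\mapsto x+r_i$.

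Adjacency between layers gives $r_{i+1}-r_i\in\{-1,0,1\}$, and every such choice is realisable. The top copy is $H_{k-1}$, reached after $k-1\geq (k-1)/2$ steps. Therefore $r=r_{k-1}$ can be any element of $\mathbb{Z}_k$, and $f(s)=r$ is arbitrary.

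\textbf{Step 2: the images of $w$ and $t$.} In the top copy, $w$ is adjacent to vertices mapped to $r+(k-1)/2$ and $r+(k+1)/2$. These two images are distinct and adjacent. For $k\geq 5$, the only vertices of the reflexive cycle adjacent to both are the two vertices themselves. So $f(w)\in\{r+(k-1)/2,\; r+(k+1)/2\}$, and either choice can be made.

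The pendant path from $w$ to $t$ has length $(k-3)/2$. Since $H$ is reflexive, $f(t)$ ranges over exactly the vertices at distance at most $(k-3)/2$ from $f(w)$.

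\textbf{Step 3: the orbit of $(s,t)$.} Both candidates for $f(w)$ lie at distance $(k-1)/2$ from $r$. So the triangle inequality gives $d(f(s),f(t))\geq 1$, that is, $f(s)\neq f(t)$.

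Conversely, consider the ball of radius $(k-3)/2$ around each candidate for $f(w)$. Around $r+(k-1)/2$ it is $\{r+1,\dots,r+k-2\}$, and around $r+(k+1)/2$ it is $\{r+2,\dots,r+k-1\}$. Their union is every vertex except $r$. Together with the freedom in $r$ from Step 1, this shows that all pairs with $s\neq t$ are attained, as the statement requires.

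The main obstacle is Step 1, and in particular excluding layers that map degenerately, with winding number $0$. I would handle this with the homotopy/winding-number invariance across each band of squares, and this is exactly where $k\geq 5$ is used. The counting in Steps 2 and 3 is routine.
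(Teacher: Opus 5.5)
Your proof is correct. It follows the same overall scheme as the paper: show that every copy of $H$ in the cylinder maps by a rotation, then locate $f(s)$, $f(w)$ and $f(t)$. The key step is carried out differently, though.

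The paper's proof only says the argument is ``similar'' to the earlier cylinder lemmas. Their inductive step is local: once one vertex of copy $i+1$ is placed, the rest of that copy is claimed to be forced. In the fully reflexive case this local forcing is not literally true. If copy $i$ maps by the identity and $(i+1,0)\mapsto 0$, then $(i+1,1)$ may locally go to either $0$ or $1$. The degenerate choice is only ruled out after going once around the cycle.

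Your winding-number invariance across each band of $4$-cycles does exactly this global bookkeeping. It also shows precisely where $k\geq 5$ is needed: a closed walk of length $4$ cannot wrap around. An equivalent elementary version runs as follows. Write the image of $(i+1,j)$ as $j+r_i+e_j$ with $e_j\in\{-1,0,1\}$. Adjacency within copy $i+1$ then forces $e_{j+1}\leq e_j$ cyclically (again using $k\geq 5$), so all $e_j$ are equal.

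You also supply two computations the paper leaves implicit: the common-neighbour argument pinning $f(w)$ to $r+(k-1)/2$ or $r+(k+1)/2$, and the union-of-balls computation showing that $f(t)$ reaches every vertex except $r$.

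One cosmetic point: as you define it, the step sum of the identity is $k$. So the winding number should be that sum divided by $k$.
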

\begin{proof}
The argument is similar to that for the previous lemma.
\end{proof}

\subsubsection{Bringing it all together}

\begin{lemma}
    Let $H$ be a forest of cacti containing a cycle of length $k\geq 5$ as an induced subgraph. Then $H$ is not preserved by an NU polymorphism of any arity.
    \label{lem:bringing-together}
\end{lemma}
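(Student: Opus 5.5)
Let $C$ be an induced cycle of length $k\geq 5$ in $H$; since $H$ is a forest of cacti, $C$ lies in one component and is a simple cycle there. Suppose, for a contradiction, that $H$ admits an NU polymorphism $n$ of arity $r\geq 3$. The plan is first to pass from $H$ to $C$, and then to use the gadgets $\cyl^+(C)$ together with the matching one of Lemmas~\ref{lem:no-majority-odd-not-reflexive}, \ref{lem:no-majority-even-not-reflexive}, \ref{lem:no-majority-even-reflexive}, \ref{lem:no-majority-odd-reflexive}. These lemmas show that the binary relation ``$\neq$'' on the vertices of $C$ (or on $\Pi(k/2)$ in the even, non-reflexive case) is pp-definable in $H$, with the cycle as a parameter. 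But $\neq$ on a set of at least three elements has no NU polymorphism, so we reach a contradiction.

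\textbf{Step 1: reduce to $C$.} The vertex set $V(C)$ has to be pp-definable in $H$, possibly using constants; constants are allowed because an NU operation is idempotent. In a cactus, $C$ shares at most one vertex with any other cycle. I would like to define $V(C)$ as the set of vertices lying on a closed walk of length $k$ through two fixed adjacent vertices of $C$. Closed walks, however, can backtrack, so this set could be too large. The more robust route is to avoid defining $V(C)$ at all and work inside the gadget. Consider homomorphisms $f\colon\cyl^+(C)\to H$ whose base copy $H_0$ is pinned to $C$ by the identity; this pinning uses constants. Because $H$ is a cactus and $C$ is induced, each successive copy of the cycle in $\cyl^+(C)$ must map onto $C$ again. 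It cannot map onto a different cycle, since that cycle would share an edge with $C$. It also cannot fold, since a fold would create a walk of length $k$ that, together with the preceding copy, yields two cycles sharing two vertices. The same induction as in the proof of Lemma~\ref{lem:no-majority-odd-not-reflexive} then applies. The pendant paths attached to $s$ and $t$ need care: $t$ is the end of a path, so it could leave $C$. To handle this I intersect with the unary relation ``is a vertex of some copy of $C$ in a second pinned cylinder'', which, by the argument just given, is exactly $V(C)$. This yields a pp-definition of $V(C)$. After that, the restrictions of the earlier lemmas apply verbatim with $C$ in place of $H$, and the loops of $H$ on $V(C)$ are exactly the loops of $C$.

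\textbf{Step 2: case split.} If $k$ is odd and $C$ has a loopless vertex, use Lemma~\ref{lem:no-majority-odd-not-reflexive}. If $k$ is even and $C$ has a loopless vertex, use Lemma~\ref{lem:no-majority-even-not-reflexive}, intersecting with the pp-definable set $\Pi(k/2)$; this set is obtained as the vertices at even, respectively odd, distance from a pinned vertex via walks within $C$ of the appropriate parity. If $C$ is reflexive, use Lemma~\ref{lem:no-majority-even-reflexive} or Lemma~\ref{lem:no-majority-odd-reflexive} according to parity. In every case we obtain $\neq$ on a set $D$ with $|D|\geq 3$, since $|\Pi(k/2)|=k/2\geq 3$.

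\textbf{Step 3: $\neq$ kills NU.} Take distinct $a,b,c\in D$ and apply $n$ coordinatewise to the $r$ pairs $(a,b),(b,a),(b,a),\dots,(b,a)$ together with suitable pairs involving $c$. Concretely, use the columns $(a,b),(b,c),(b,c),\dots$ and $(c,b),\dots$, arranged so that both coordinates evaluate to $b$ by near-unanimity, while every input pair is in $\neq$. Such an arrangement is standard: the first row is $a,b,b,\dots,b$ and the second row is $b,c,c,\dots,c$; then the first coordinate gives $n(a,b,\dots,b)=b$. For the second, swap roles so that the second row is $c,b,\dots,b$ with the first row $b,a,\dots,a$, giving $n(b,a,\dots,a)=a$ and $n(c,b,\dots,b)=b$. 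Choosing rows $x=(a,b,\dots,b)$ and $y=(b,a,\dots,a)$ gives $n(x)=b$ and $n(y)=a$, which is consistent, so a column must be chosen that forces equality instead: take $x=(c,b,b,\dots,b)$ and $y=(b,c,b,\dots,b)$. This fails because the columns $(b,b)$ are not in $\neq$. The correct standard argument picks $x=(a,b,b,\dots,b)$ and $y=(b,a,a,\dots,a)$ only when $r=2$, so I would instead cite the known fact that for $|D|\geq 3$ the relation $\neq$ is preserved by no NU operation: it pp-defines $3$-colouring-type relations and NU-preserved relations have bounded strict width. Step 3 is routine; the main obstacle is Step 1, namely making the restriction to $C$ rigorous inside $H$ using the cactus property.
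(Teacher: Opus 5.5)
Your overall strategy is the paper's: use idempotency to get constants, use the cactus property to show that every level of a pinned cylinder lands on $C$ (so the relevant vertex set is a pp-definable subalgebra), pp-define disequality on it via the orbit lemmas, and conclude because $\neq$ on at least three points has no NU polymorphism. Two steps need repair.

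\textbf{The definition of $\Pi(k/2)$ fails as written.} The even case with a loopless vertex includes partially reflexive cycles. A walk can idle at a looped vertex, so the set of vertices reached from a pinned vertex by walks of a prescribed length stops being a parity class once those walks can reach a loop. For example, if $c_1$ is looped, then $c_0c_1c_1$ and $c_0c_1c_2$ are both walks of length $2$. Use the cylinder instead; the paper does this by existentially quantifying $s$.

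In a pinned $\cyl_{k+1}(C)$, your Step 1 argument plus the loopless vertex force every level to rotate by exactly $\pm 1$. Hence the images of a fixed vertex $j$ of the top copy are exactly the vertices of the same parity as $j$. Choosing $j$ of the parity of $k/2$ gives $\Pi(k/2)$.

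This also corrects a claim in your Step 1. In the even loopless case, your ``second pinned cylinder'' defines a parity class, not $V(C)$. That causes no harm, because there $s$ and $t$ are themselves vertices of copies of $C$, so you only need $V(C)$ in the reflexive cases, where the pendant end $t$ can leave $C$.

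\textbf{Step 3 should be a single citation.} Discard the column manipulations; as you noticed, none of them works. Cite the standard fact the paper uses: every polymorphism of $\neq$ on an $n$-element set with $n\geq 3$ (that is, of $K_n$) is essentially unary. So its idempotent polymorphisms are projections, and an NU operation on a set with at least two elements is never a projection. Your bounded-strict-width argument reaches the same conclusion, but through a much heavier theorem.
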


\begin{proof}
To rule out a NU operation, we will argue that there is a definable pp-relation that is violated by that operation. 
Since an NU operation is idempotent we may assume, on the relational side for our pp-definitions, that we have access to constants \cite{barto2017polymorphisms}. Let $H'$ be some cycle in $H$. 
  
  If $H'$ is of odd length, or both even and reflexive, then the construction $\cyl_{k+1}(H)$ demonstrates that the vertices of the cycle are pp-definable from the constants and may be considered a subalgebra of size at least $5$.  Indeed, since $H$ is a cactus, the successive copies of $H'$ in $\cyl_{k+1}(H)$ cannot deviate from $H'$. We may therefore existentially quantify all variables except the one associated with $(k-1,0)$ to prove this (assuming constants on the base copy). However, since disequality is pp-definable on this subalgebra (according to Lemmas~\ref{lem:no-majority-odd-not-reflexive}, \ref{lem:no-majority-even-reflexive} and \ref{lem:no-majority-odd-reflexive}) we know all idempotent polymorphisms on this subalgebra are projections. This rules out the possibility of an NU polymorphism.

    If $H'$ is of even length and not reflexive, then existentially quantifying $s$ (see Figure~\ref{fig:C6s}) in $\cyl^+_{k}(H)$ shows that $\Pi(k/2)$ is a subalgebra of size at least $3$. Furthermore, disequality is pp-definable on this subalgebra, according to Lemma~\ref{lem:no-majority-even-not-reflexive}, so all idempotent polymorphisms on this subalgebra are projections. Again, this rules out the possibility of an NU polymorphism.
\end{proof}

\subsection{Small Cycles}
\subsubsection{Irreflexive $3$-cycle}

We make a similar argument for the irreflexive triangle as in the previous section. 
\begin{lemma}
    Let $H$ be an irreflexive triangle. In $\cyl_3(H)$, let $s,t$ be adjacent vertices in the top copy of $H_0=H$. Let $f$ range over all homomorphisms from $\cyl_3(H)$ to $H$ so that $f(H_0)$ is the identity. Then the orbit of $(s,t)$ in $V(H)^2$ is precisely $s \neq t$.
\label{lem:no-majority-K3}
\end{lemma}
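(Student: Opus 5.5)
The plan is to follow the inductive scheme of Lemma~\ref{lem:no-majority-odd-not-reflexive}: show that each copy of $H$ in $\cyl_3(H)$ is mapped by a rotation of the triangle, and then read off the possible values of $(f(s),f(t))$. Write the copies as layers $0,1,2$, with vertices $(j,i)$ for $j\in\{0,1,2\}$.

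\emph{Step 1: layers are mapped by rotations.} Suppose layer $i$ is mapped by a bijection $\sigma_i$; layer $0$ is the base case, mapped by the identity. Layer $i+1$ is a triangle and $H$ is irreflexive, so $f$ restricted to layer $i+1$ is a bijection $\sigma_{i+1}$. The vertical edges $(j,i)\sim(j,i+1)$ force $\sigma_{i+1}(j)\neq\sigma_i(j)$ for every $j$, so $\sigma_i^{-1}\sigma_{i+1}$ is a derangement of $\{0,1,2\}$. The only derangements of a $3$-set are the two $3$-cycles $j\mapsto j\pm1$. Hence every $\sigma_i$ is a rotation $j\mapsto j+a_i \pmod 3$, with $a_0=0$ and $a_{i+1}-a_i\in\{\pm1\}$.

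\emph{Step 2: all rotations occur in the top layer.} The top layer has $a_2=\epsilon_1+\epsilon_2$ with $\epsilon_1,\epsilon_2\in\{\pm1\}$. This gives $a_2\in\{2,0,-2\}$, which is every residue mod $3$. Conversely, each such choice of steps defines a homomorphism $\cyl_3(H)\to H$ that is the identity on $H_0$: horizontal edges go to edges because each layer map is a bijection onto the triangle, and vertical edges go to edges because consecutive rotations differ by a nontrivial rotation. So the attainable top maps are exactly the three rotations.

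\emph{Step 3: reading off the orbit.} Since $s,t$ are adjacent and $H$ is irreflexive, always $f(s)\neq f(t)$. This gives the inclusion of the orbit in $\{s\neq t\}$. For the reverse inclusion, note that a fixed adjacent pair, say $s=(0,2)$, $t=(1,2)$, yields only $(a,a+1)$ for $a\in\{0,1,2\}$. These are three of the six distinct ordered pairs. The remaining three, $(a+1,a)$, arise from the other orientation $s=(1,2)$, $t=(0,2)$.

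This is the main subtle point, and I will make it explicit in the write-up. The statement says only that $s,t$ are adjacent in the top copy; taken over the choices of adjacent pair, the orbit is exactly the relation $s\neq t$ on $V(H)$. I would also remark that this is all that is needed downstream. The edge relation of the irreflexive triangle already coincides with disequality on $\{0,1,2\}$, so disequality is pp-definable and the subalgebra argument of Lemma~\ref{lem:bringing-together} applies verbatim.
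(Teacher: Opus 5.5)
Your proof is correct and follows the paper's approach. The paper only says the argument is ``similar to'' Lemma~\ref{lem:no-majority-odd-not-reflexive}, that is, the same layer-by-layer induction showing each copy is mapped by a rotation, which you make precise via derangements of a $3$-set.

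Your Step~3 caveat is a genuine correction that the paper's one-line proof glosses over. For a \emph{fixed} ordered adjacent pair $(s,t)$ the orbit is only the directed triangle $\{(a,a+1)\}$ (or its reverse). Together with the constants, that relation is still preserved by a majority: return the repeated value when two arguments agree, and the first argument when all three are distinct. So the downstream Lemma~\ref{lem:bringing-together-bis} really relies on your closing observation that $E(H)$ already coincides with disequality on $\{0,1,2\}$.
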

\begin{proof}
The argument is similar to that for Lemma~\ref{lem:no-majority-odd-not-reflexive}.
\end{proof}
Note that the previous lemma fails if $H$ has any reflexive vertex.
\begin{lemma}
    Let $H$ be a forest of cacti containing an irreflexive triangle as an induced subgraph. Then $H$ is not preserved by an  NU polymorphism.
    \label{lem:bringing-together-bis}
\end{lemma}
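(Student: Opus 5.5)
The plan is to follow the template of Lemma~\ref{lem:bringing-together}: find a pp-definable relation in $H$ (with constants) that an NU operation cannot preserve. Since NU operations are idempotent, we may use all singleton unary relations as constants in our pp-definitions \cite{barto2017polymorphisms}. Let $H'=\{0,1,2\}$ be the induced irreflexive triangle in $H$.

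First, show that $\{0,1,2\}$ is a pp-definable subset of $V(H)$. Take $\cyl_3(H')$, fix its base copy by constants to the identity, and existentially quantify every vertex except one vertex $x$ of the top copy. For the induction of Lemma~\ref{lem:no-majority-odd-not-reflexive} to apply, each copy must map into $H'$. I will argue this from the cactus structure. If the $i$-th copy maps bijectively onto $H'$, then a vertex of copy $i+1$ corresponding to $j$ must be adjacent to $f(j)$. Adjacent vertices of copy $i+1$ sit above the adjacent pair $f(j),f(j')$, so each lies in $N(f(j))$ or $N(f(j'))$. An image outside $H'$ would then have to be a common neighbour of two vertices of $H'$, or would create a cycle sharing an edge with $H'$. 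Either way two cycles would share an edge, contradicting that $H$ is a cactus. The images also cannot include a loop: the triangle is irreflexive, so an image is never equal to a vertex it must be adjacent to, and the image triangle must again be a triangle of $H$. The only triangle through those vertices is $H'$ itself, since triangles sharing an edge are forbidden in a cactus. This copy-by-copy step is the main obstacle, and I would check carefully that no degenerate image (for instance two copy vertices mapped to the same vertex) escapes the triangle. Degenerate images are ruled out because adjacent vertices get adjacent images and $H'$ has no loops, so each copy's image is an irreflexive triangle containing a neighbour of each vertex of the previous image triangle. With this in hand, every copy maps onto $H'$ by an automorphism, and $\{0,1,2\}$ is pp-definable. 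Every value in it is attained, since we can rotate.

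Second, by Lemma~\ref{lem:no-majority-K3}, the pair $(s,t)$ of adjacent top vertices, after quantifying out everything else, defines exactly the disequality relation $\neq$ on $\{0,1,2\}$. More simply, the edge relation of $H$ restricted to this subalgebra already is $\neq$ on $\{0,1,2\}$, because $H'$ is an induced irreflexive triangle. So we may take the pp-formula $E(x,y)\wedge U(x)\wedge U(y)$, where $U$ is the unary relation from the first step.

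Finally, $\{0,1,2\}$ with $\neq$ is the graph $K_3$. It is classical that every idempotent polymorphism of $K_3$ is a projection; this is the same fact already invoked in Lemma~\ref{lem:bringing-together} for disequality on a set of size at least $3$. The restriction of an NU polymorphism $n$ of $H$ to $\{0,1,2\}$ is an idempotent polymorphism of $(\{0,1,2\},\neq)$, hence a projection. A projection violates the near-unanimity identities when $|\{0,1,2\}|\ge 2$, for example $n(1,0,\ldots,0)=0$ fails for projection onto the first coordinate. This contradiction shows that $H$ has no NU polymorphism of any arity.
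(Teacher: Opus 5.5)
Your proposal is correct and follows the paper's route. You use $\cyl_3$ with the base copy fixed by constants to show that the triangle's vertex set is a subuniverse, obtain disequality on it, and conclude that the restricted NU is an idempotent polymorphism of $K_3$, hence a projection. Taking $E(x,y)\wedge U(x)\wedge U(y)$ is the right way to get disequality, and you can drop the appeal to Lemma~\ref{lem:no-majority-K3}: successive copies in $\cyl_3$ differ by a $3$-cycle, so two adjacent top vertices realise only the three pairs $(s,s+d)$ for a fixed $d\in\{1,2\}$, i.e.\ the graph of a cyclic shift, which is preserved by a suitable majority on $\{0,1,2\}$.
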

\begin{proof}
The argument is similar to that for Lemma~\ref{lem:bringing-together}.
\end{proof}

\subsubsection{Square with at least three reflexive vertices }

 Note that in \cite{brewster2008near} the authors showed that no reflexive cycles of length at least four admit an NU polymorphism (without the use of complexity assumptions), we show that this is also the case for squares, i.e. cycles of length four,  with  exactly three reflexive vertices. 

\begin{lemma}
Let $H$ be a cactus containing a square with three reflexive vertices as a subgraph. Then $H$ is not preserved by an  NU polymorphism.
\end{lemma}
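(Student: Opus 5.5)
The plan is to follow the strategy of Lemma~\ref{lem:bringing-together}: find a relation pp-definable in $H$ (with constants, which we may use since an NU operation is idempotent) that no NU operation of any arity preserves. Label the square $0,1,2,3$ in cyclic order, with $0,1,2$ reflexive and $3$ possibly loopless. Since $H$ is a cactus, the square shares no edge with any other cycle. Hence every walk in $H$ that leaves the square and returns does so through a single cut vertex of the square. The first step is to build a gadget that confines homomorphisms to the square. I would use the construction $\cyl_m(H')$ with $H'$ the square, together with the cactus property, to show that when the base copy is fixed to the identity, every copy in the cylinder is mapped into $\{0,1,2,3\}$. Existentially quantifying as in Lemma~\ref{lem:bringing-together}, the set $\{0,1,2,3\}$ then becomes a pp-definable subalgebra, and we may work entirely inside the square.

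The obstruction I intend to exploit is topological, in the spirit of \cite{brewster2008near} for reflexive cycles: a winding number. Take a closed walk gadget $C_{4m}$, a cycle on $4m$ vertices, and consider its homomorphisms into the square. Because $0,1,2$ are reflexive, a homomorphic image of $C_{4m}$ may stall at those vertices, so its winding number around the square is well defined and can be $0$ or $\pm j$. The winding-number-$0$ maps are exactly the ones that retract into the reflexive path $0$--$1$--$2$ or backtrack. I then fix constants on a few positions of the gadget and read off a $k$-ary relation $R$ at $k$ chosen positions $x_1,\dots,x_k$, with $k$ the arity of the NU operation. The gadget should be designed so that every tuple in $R$ that agrees with a fixed ``wrapping'' pattern on any $k-1$ coordinates can be realised by a non-wrapping map, while the full wrapping tuple is not realisable. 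When the fourth vertex $3$ is loopless, the constraint that the $3$-position has both neighbours in $\{0,2\}$ is what I would use to force nontrivial winding in any map that hits $3$.

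Given such an $R$, I would argue as follows. Take $k$ tuples $r^1,\dots,r^k\in R$, where $r^i$ equals the forbidden wrapping pattern $w$ except in coordinate $i$, and apply $n$ coordinatewise. The result is $n(r^1,\dots,r^k)=w\notin R$ by the near-unanimity identities, a contradiction. The pp-definition of $R$ is simply the gadget with all non-distinguished variables existentially quantified.

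The main obstacle is the precise design of the gadget. It must satisfy two demands at once: the $(k-1)$-element projections of $R$ must be large enough to contain each $r^i$, yet $R$ must omit $w$. It must also behave correctly both when vertex $3$ is reflexive and when it is not, and for the square with exactly three loops versus four. My first attempt would be to place $x_1,\dots,x_k$ evenly around a cycle $C_{4k}$ with the constant $0$ pinned at one vertex. I would let $w$ assign the vertices of the square in cyclic order around the gadget, so that realising $w$ forces winding number $1$. Each $r^i$ would redirect the $i$-th point back through the reflexive path $0$--$1$--$2$ to allow winding number $0$. I would check by hand that for $w$ the pinned vertex and the reflexive path constraints force the image through $3$ in a way incompatible with the cactus constraints.
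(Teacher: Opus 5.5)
\textbf{There is a genuine gap.} Your proposal never constructs the relation $R$, and that relation is the whole proof. The design you do sketch fails. Take a cycle $C_{4k}$ with $0$ pinned at one vertex and the $x_i$ spaced four apart. Between any two vertices of the square there are walks of length $4$ going either way round, because of the loops at $0,1,2$: for instance $0,0,0,0,1$ and $0,3,2,1,1$. So the values at the $x_i$ carry no winding information. The wrapping tuple $w$ is realised, e.g.\ by a walk that goes once round the square and otherwise stalls at looped vertices, so $w\in R$.

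Your claim that hitting the loopless vertex $3$ forces nontrivial winding is also false: the walk $0,3,0$ hits $3$ with winding $0$.

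Filling the cycle to force winding $0$ does not rescue the approach, because the near-unanimous tuples drop out too. Take the wheel on $y_1y_2y_3y_4$. It excludes $w=(0,1,2,3)$, but it also excludes $r^2=(0,3,2,3)$: in a cactus no vertex is adjacent to $0$, $2$ and the loopless $3$. So the winding heuristic does not locate the obstruction.

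Your confinement step is wrong as stated as well. With loops present, a layer of the cylinder can leave the square: if $y$ is a neighbour of $0$ outside the square, the next layer can map to $(y,0,1,0)$.

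\textbf{The missing idea.} No gadget is needed. The cactus property already pins down common neighbourhoods in the square, so you can evaluate the NU operation $n$ directly.

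Label the square so that $0,2,3$ are looped. Let $e_j=n(0,\dots,0,1,2,\dots,2)$, with $j$ entries equal to $2$, for $1\le j\le k-2$. Applying $n$ to suitable columns of edges gives the following.
\begin{enumerate}
\item Each $e_j$ is adjacent to $n(1,\dots,1,2,1,\dots,1)=1$ and to $n(3,\dots,3,2,3,\dots,3)=3$.
\item $e_1$ is adjacent to $n(0,\dots,0,1)=0$.
\item $(e_j,e_{j+1})\in E(H)$ for each $j$.
\end{enumerate}
In a cactus the only common neighbours of $1$ and $3$ are $0$ and $2$, and these are non-adjacent. Hence $e_1=0$, and then $e_j=0$ for all $j\le k-2$.

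Finally, apply $n$ to the columns $(0,0),(1,2),(2,2),\dots,(2,2)$. This gives $(n(0,1,2,\dots,2),\,n(0,2,\dots,2))=(0,2)\in E(H)$, a contradiction. The argument is uniform in the arity $k$, whereas your route would need a separately designed relation for each $k$.
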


\begin{proof}
Let $H$ be a  cactus containing the square with three reflexive vertices as a subgraph.

Let $ \{(0, 1), (1, 0), (1, 2), (2, 1), (2, 3), (3, 2), (3, 0), (0, 3), (0, 0), (2, 2),(3, 3)\} \in E(H)$. 

For a contradiction, assume that $H$ is preserved by an NU operation,  $n$,  of some arity $k\ge 3$. Let $e_1=n(0, 0, \ldots, 0, 1, 2)$, 
Since $(0,1), (2,1), (1, 2)\in E(H)$ we have 
$$ n((0,1),(0, 1), \ldots, (0,1), (1, 2), (2, 1))=(n(0, \ldots, 0, 1, 2), n(1, \ldots, 1, 2, 1))=(e_1, 1)\in E(H)$$
In a similar way, using the edges $(0, 3), (1, 2), (2, 3)$ we obtain $(e_1, 3)\in E(H)$, and because $(0,0), (1, 0), (2, 1)\in E(H)$ we get
$$  n((0,0), \ldots, (0,0), (1, 0), (2, 1))=(n(0, \ldots, 0, 1, 2), n(0, \ldots, 0, 0, 1))=(e_1, 0)\in E(H).$$

Since $H$ is a  cactus and $(e_1, 1), (e_1, 3), (e_1, 0)\in H$ we must have that $e_1=0$, as otherwise we would have two cycles sharing the edge $(1, 0)$ (for example).

Now, let $e_2=n(0, \ldots, 0,  1 , 2 ,2)$, $e_3=n(0, \ldots, 0, 1, 2, 2, 2), \ldots, e_{k-3}=n(0, 0, 1, 2, \ldots, 2), e_{k-2}=n(0, 1, 2, \ldots, 2)$.
Since $n$ is edge-preserving and $(0, 0), (0, 1), (1, 2),(2, 2)\in E(H)$, we have that 
 $(e_i,e_{i+1}) \in E(H)$ for all $i=1, \ldots, k-3 $. 

As we did above for $e_1$, we can show  that $(e_i, 1), (e_i, 3)\in E(H)$ for all $i=1, \ldots, k-2$.
It then follows that  $(e_2, 0), (e_2, 1), (e_2, 3)\in E(H)$, and as $H$ is a cactus we must have $e_2=0$. Similarly we must have $e_3=\cdots =e_{k-2}=0$.
This yields $n(0,1,2, \ldots 2)=0$.
Because $(1,2), (2, 2)\in E(H)$ we have that $$n((0, 0),(1, 2), (2, 2), \ldots, (2, 2))=(n(0, 1, 2, \ldots, 2), n(0, 2, \ldots, 2))=(0, 2) \in E(H),$$
a contradiction.

    \end{proof}

\section{Cacti with majority}\label{maj}

In this section we construct an edge-preserving majority for forests of cacti without the induced subgraphs mentioned in Theorem~\ref{theomajority}. We first construct a majority for a connected cactus without the forbidden induced subgraphs, and then show how the majority can be extended to forests of cacti.

Let $G=(V(G), E(G))$ be a cactus graph that is loop-connected. For each two vertices $u, v \in V(G)$ we define the {\it distance} $d(u, v)$ as the length of a shortest path connecting $u$ to $v$.
To restrict ourselves to cacti without the forbidden induced subgraphs described in Theorem~\ref{theomajority},  we assume that all cycles in $G$ are triangles with at least one reflexive vertex, and squares with at most two reflexive vertices.  Let $C_1, C_2, \ldots, C_p$ be all cycles of $G$, where $C_1, \ldots, C_q$ for some $q\le p$, are the squares with two reflexive vertices. Assume that at least one of these exists, for ease of index notation, if they do not then the following still holds ignoring the relevant parts of the majority. 
 
For each cycle  $C_i$ label its vertices with  $0_i, 1_i, 2_i$, and $3_i$ if $C_i$ is a square, anticlockwise as follows: 
\begin{itemize}
\item when $C_i$ is irreflexive, $0_i$ is the vertex closest (i.e. at shortest distance) to the reflexive component of $G$;
\item when $C_i$ has exactly one reflexive vertex, that vertex should be labelled $0_i$;
\item when $C_i$ has exactly two reflexive vertices they are labelled $0_i, 2_i$ if $C_i$ is a triangle, and $0_i, 3_i$ if $C_i$ is a square. Note that, in the latter case the edges of $C_i$ are $(0_i, 1_i), (1_i, 2_i), (2_i, 3_i)$ and $(3_i, 0_i)$;
\item when $C_i$ is a fully reflexive triangle, starting at an arbitrary vertex.
\end{itemize}

Note that a vertex belonging to two cycles $C_i$ and $C_j$ may be labelled in two ways i.e. $0_i$ and $2_j$, but this shall not cause confusion in what follows.

Let $G_T$ be the graph obtained from $G$ by removing exactly one edge from each cycle as specified below.
$G_T$ is a spanning graph that is a tree in the irreflexive components of $G$. It is obtained by removing from $C_i$ the edge 
\begin{itemize}
\item $(2_i, 3_i)$ when $C_i$ is a square with two reflexive vertices; 
\item $ (3_i, 0_i)$ when $C_i$ is a square with at most one reflexive vertex;
\item $(1_i, 2_i)$ when $C_i$ is a triangle.
\end{itemize}

Let $A$ be the reflexive component of $G$,  and denote by 
 $X_1, X_2, \ldots, X_k$, for some $k\ge q \ge 0$
 ,  the irreflexive components  of $G_T$, with $X_1, \ldots, X_q$ containing the irreflexive parts of the squares $C_1, \ldots, C_q$ (the squares with two reflexive vertices). We label each reflexive vertex that neighbours an irreflexive vertex in $G_T$, 
$p_{X_i}$, where $X_i$ is the connected irreflexive subtree given by removing the vertex $p_{X_i}$. Note that we can have $p_{X_i}=p_{X_j}$, for many $j's$ as a reflexive vertex might share edges with many irreflexive vertices.

     In each $X_i$, we label 
each leaf $l_1,...,l_{n_i}$ such that  $d(l_{1}, p_{X_i})\geq d(l_2, p_{X_i})\geq \cdots \geq d(l_{n_i},p_{X_i})$. Here and in what follows, unless mentioned otherwise, we take distances in $G_T$, i.e. considering  paths in $G_T$, the spanning graph of $G$. 
For each $j=1, \ldots, n_i$ in each $X_i$, we call the  path from $l_j$ to $p_{X_i}$, $P_j$; and for each $y\in X_i$ we call the unique vertex on $P_j$ that is of shortest distance to $y$, $k^j_y$ (note that if $y \in P_j$ then $k^j_y=y$).
It is worth remarking that as leaves and paths are labelled within the separate irreflexive components,  we may have leaves and paths with the same labelling, this shall not cause any confusion in the following, as we will be using the definitions only within one irreflexive component.

We colour the irreflexive components of $G_T$ with two colours as follows:  for all $i=1, \ldots, k$ the irreflexive neighbours of $p_{X_i}$ are coloured with the same  colour, 
and propagate the colouring through all vertices of the corresponding $X_i$, in such a way that adjacent vertices do not share the same colour. Given $a, b \in \bigcup_{i=1}^k X_i$ we write $[a] = [b]$ if $a$ and $b$ were assigned the same colour, and $[a] \neq [b]$ otherwise. We call these `colour classes'.

For each vertex $z\in X_i$, for any $i=1, \ldots, k, $ we let $z^+\in X_i\cup\{p_{X_i}\}$  be the neighbour, in $G_T$  of $z$ closest to $p_{X_i}$. 

For any three vertices $a, b, c\in V(G)$ their {\it{median}}, $med\{a,b,c\}$, is a vertex that lies on a shortest path between each pair of vertices $a,b,c$. In $G$, the median is not always defined, 
to overcome this, and to make the majority function that we will define edge-preserving, when referring to $x=med\{a, b, c\}$ we mean the median in the subgraph of $G_{S}$ obtained by adding to the spanning graph $G_T$ the edges $(3_i, 2_i)$ for all $i=1, \ldots, q$, and removing any edge from the reflexive triangles. So $G_{S}$ will be a {\it{spanning cactus}} whose cycles are squares with exactly two reflexive vertices. It is then easy to check that in $G_S$ the median is always defined, and is always unique (despite the shortest paths between vertices not necessarily being unique). 

For each triple of vertices  $a, b, c\in V(G)$ (not necessarily all distinct) we define a new triple 
of vertices $u, v,w$, such that $\{u, v, w\}=\{a, b, c\}$ as a multiset, as follows:
\begin{enumerate}
\item[Case 0.] if $med\{a, b, c\}\in A$ then $u, v, w$ are assigned in any order, just ensuring that if any of them belongs to an irreflexive component then $u$ should be one of the irreflexive vertices.
\end{enumerate}

\noindent When at least two of $a, b,c$  belong to the same irreflexive connected component, $X_i$ for some $i=1, \ldots, k$, we do the following:
\begin{enumerate}

\item[Case 1.] if  $a, b, c\in P_{t}\subseteq X_i$ for some  $i=1, \ldots, k$  and some $t=1, \ldots, n_i$   we  define $u, v, w$ such that $d(u, p_{X_i})\geq d(v, p_{X_i})\geq d(w, p_{X_i})$;
        
\item[Case 2.] if $a, b, c\in  X_i$ for some $i=1, \ldots, k$, but only two of $a, b, c$ belong to the same path, we choose $P_t$,  $t=1, \ldots n_i$, to be the smallest possible index path containing two of $a, b,c$, and define $u, v, w$ such that  $d(k_u^t, p_{X_i})\geq d(k_v^t, p_{X_i})\geq d(k_w^t, p_{X_i})$. 
Note that we will always have $u\in P_t$ so $k_u^t=u$.

\begin{figure}[H]
\centering
\resizebox{0.6\textwidth}{!}{%
\begin{circuitikz}
\tikzstyle{every node}=[font=\Large]
\node at (1.25,15.5) [circ] {};
\node at (6.25,15.5) [circ] {};
\node at (11,15.5) [circ] {};
\node at (15,15.5) [circ] {};
\node at (4,15.5) [circ] {};
\node at (6.25,16.5) [circ] {};
\node at (6.25,18) [circ] {};
\draw (1.25,15.5) to[short] (15,15.5);
\draw (6.25,18) to[short] (6.25,15.5);
\node [font=\Large] at (6.75,16.5) {$a$};
\node [font=\LARGE] at (7,18.5) {$P_2$};
\node [font=\LARGE] at (1,16.5) {$P_1$};
\node [font=\Large] at (4,14.75) {$b=k^1_b$};
\node [font=\Large] at (6.25,14.75) {$k^1_a$};
\node [font=\Large] at (11,14.75) {$c=k^1_c$};
\draw [->, >=Stealth] (1.5,16.5) -- (2.75,16.5);
\draw [->, >=Stealth] (7,18) arc (180:270:2cm);
\end{circuitikz}
}%
\caption{Case 2. with $u=b$, $v=a$, and $w=c$. }
\label{fig:casetwo}
\end{figure}


\item[Case 3.] if $a, b, c\in  X_i$ for some $i=1, \ldots, k$, are all three on different paths, say $P_j$,$P_k$,$P_m$, then we take  $t=min \{j,k,m\}$, and let  $u,v,w$ be such that $d(k^t_u,p_{X_i}) \geq d(k^t_v,p_{X_i}) \geq d(k^t_w,p_{X_i})$. If two of $k^t_u$, $k^t_v$, $k^t_w$ are equal, then we take the next lowest indexed path to base this on, i.e., $P_s$ with $s=min\{\{j,k,m\} \,\backslash\, t\}$; if this distinction is not possible, i.e. all indices result in two of $k$s being equal, then we take $u$ and $v$ to be two of the vertices, of $a, b, c$,  in the same colour class. 
\begin{figure}[H]

\centering
\begin{subfigure}[h]{0.4\linewidth}
\resizebox{1.05\textwidth}{!}{%
\begin{circuitikz}
\tikzstyle{every node}=[font=\Large]
\node at (1.25,15.5) [circ] {};
\node at (6.25,15.5) [circ] {};
\node at (11,15.5) [circ] {};
\node at (15,15.5) [circ] {};
\node at (4,15.5) [circ] {};
\node at (6.25,16.5) [circ] {};
\node at (6.25,18) [circ] {};
\draw (1.25,15.5) to[short] (15,15.5);
\draw (6.25,18) to[short] (6.25,15.5);
\node [font=\Large] at (6.75,16.5) {$a$};
\node [font=\LARGE] at (7,18.5) {$P_2$};
\node [font=\LARGE] at (1,16.5) {$P_1$};
\node [font=\Large] at (4,14.75) {$b=k^1_b$};
\node [font=\Large] at (6.25,14.75) {$k^1_a$};
\node [font=\Large] at (11,16.25) {$k^1_c$};
\draw [->, >=Stealth] (1.5,16.5) -- (2.75,16.5);
\draw [->, >=Stealth] (7,18) arc (180:270:2cm);
\draw (11,13) to[short] (11, 15.5);
\node [font=\Large] at (11.5,14.5) {$c$};
\node at (11,14.5) [circ] {};
\node at (11,13) [circ] {};
\draw [->, >=Stealth] (11.75,13) arc (180:90:2cm);
\node [font=\LARGE] at (11.75,12.5) {$P_3$};
\end{circuitikz}
}%
\caption{Distinct $k^1$'s.}
\label{fig:casethreept1}
\end{subfigure}%
\hspace{0.5cm}
\begin{subfigure}[h]{0.35\linewidth}
\resizebox{\textwidth}{!}{
\begin{circuitikz}
\tikzstyle{every node}=[font=\Large]
\node at (1,15) [circ] {};
\node at (4,15) [circ] {};
\node at (6,15) [circ] {};
\node at (10,15) [circ] {};
\node at (6,16) [circ] {};
\node at (6,19) [circ] {};
\node at (4,19) [circ]{};

\node at (4.5,18.25)[circ]{};

\node at (6,18.25)[circ]{};
\node [font=\LARGE] at (0.75,16) {$P_1$};
\node [font= \LARGE] at (3,19.5){$P_2$};
\node [font= \LARGE] at (6.75,19.5){$P_3$};

\draw (1,15) to [short] (10,15);
\draw (6,15) to [short] (6,19);

\draw (4,19) to [short] (6,16);

\draw [->, >=Stealth] (6.75,19) arc (180:270:1.75cm);
\draw [->, >=Stealth] (1.25,16)--(2.5,16);
\draw [->, >=Stealth] (3.25,19)--(5.25,16);

\node [font=\Large] at (5,18.25) {$a$};
\node [font=\Large] at (6.5,18.25) {$b$};
\node [font=\Large] at (4,14.5) {$c=k^1_c$};
\node [font=\Large] at (7,14.5) {$z=k^1_a=k^1_b$};

\end{circuitikz}
}
\caption{Non distinct $k^1$'s, distinct $k^2$'s}
\label{fig:casethreept2}
\end{subfigure}%
\hspace{1cm}
\begin{subfigure}[h]{0.4\linewidth}
\resizebox{\textwidth}{!}{
\begin{circuitikz}
\tikzstyle{every node}=[font=\Large]
\node at (1,15) [circ] {};
\node at (4,15) [circ] {};
\node at (6,15) [circ] {};
\node at (8,15) [circ] {};
\node at (12,15) [circ] {};
\node at (6,16) [circ] {};
\node at (6,17) [circ] {};
\node at (6,14) [circ] {};
\node at (6,13) [circ]{};

\node [font=\LARGE] at (0.75,16) {$P_1$};
\node [font= \LARGE] at (6.75,17.5){$P_2$};
\node [font= \LARGE] at (6.75,12.5){$P_3$};

\draw (1,15) to [short] (12,15);
\draw (6,13) to [short] (6,17);

\draw [->, >=Stealth] (6.75,17) arc (180:270:1.75cm);
\draw [->, >=Stealth] (6.75,13) arc (180:90:1.75cm);
\draw [->, >=Stealth] (1.25,16)--(2.5,16);

\node [font=\Large] at (6.5,16) {$a$};
\node [font=\Large] at (6.5,14) {$b$};
\node [font=\Large] at (4,14.5) {$c$};
\node [font=\Large] at (5.75,15.25) {$z$};

\end{circuitikz}
}
\caption{Non distinct $k^1$'s, $k^2$'s and $k^3$'s.}
\label{fig:casethreept3}
\end{subfigure}%
\caption{Three variants on Case 3.}
\subcaption*{In Figure \ref{fig:casethreept1}, we take $u=b$, $v=a$, and $w=c$.
In Figure \ref{fig:casethreept2}, we have non distinct $k^1$'s, and so we take the path ordering based on $P_2$, hence, $u=a$, $v=b$ and $w=c$.
In Figure \ref{fig:casethreept3}, we can see that $k^1_a=k^1_b=z$, $k^2_b=k^2_c=z$, and $k^3_a=k^3_c=z$, and so we take $u$ and $v$ to be two of the three vertices that are in the same colour class. }
\end{figure}



 \item[Case 4.] if only two of  $a, b, c$ belong to $X_i$, for some $i=1, \ldots, k$  and the other element does not belong to $X_i$, we then define $u, v, w$ such that $u, v\in \{a, b, c\}\cap X_i$, $d(u, p_{X_i})\ge d(v, p_{X_i})$ and  $w\not\in  X_i$. 

\end{enumerate}

To simplify notation in the definition of the majority function, we let $B$  be the set containing all triples $(a, b,c)$ such that: two of $a, b,c$ equal   $0_i$ and $3_i$ for some $i=1, \ldots, q$, and the third one belongs to $X_i$,  for the same $i$. 

Recall that $X_1, \ldots, X_q$ are the irreflexive components associated with the squares with two reflexive vertices, 
 note that when $(a, b, c)\in B$ we are in Case 0. above, and  $\{u, v, w\}=\{a, b, c\}$ are such that $\{v, w\}=\{0_i, 3_i\}$ and $u\in X_i$.

We are now in a position to, finally, define a majority function $m:G^3\longrightarrow G$
as follows: $m$ is symmetric and 

\begin{equation}
    m(a,b,c)=\begin{cases}
    0_i,& {\rm if}\ (a, b, c)\in B,  \  u \in X_i, [u]=[2_i];\  \\
    3_i,& {\rm if}\ (a,b, c)\in B,\   u\in X_i, [u]=[1_i]; \\
x, & \text{if\ }(a,b, c)\notin B, \text{\ and\ } x\in A \ \text{\ or \ } [u]=[v]=[x]; \\
x^+,& \text{if \ } [u]=[v]\neq[x];  \\
p_{X_i}, & \text{if \ } [u]\neq [v],\  u, v\in X_i, \text{\ and\ } w\not\in X_i,\  i=q+1, \ldots, k; \\
0_i ,& \text{if \ }  [u]\neq[v],  d(w,0_i)<d(w,3_i),  \ u, v\in X_i,\ \text{and\ }  w\notin X_i, \ i=1, \ldots, q;\\
3_i ,& \text{if \ }[u]\neq [v],  d(w,3_i)<d(w,0_i),\ u, v\in X_
i,\ \text{and\ } w\notin X_i, \ i=1, \ldots, q;\\
k^t_w, & \text{if \ } [u]\neq [v] \text{\ and\ } [w]=[k^t_w]; \\
(k^t_w)^+, & \text{if \ } [u]\neq [v] \text{\ and\ } [w]\neq[k^t_w], \\
\end{cases}
\label{majorityfunction}
\end{equation}
with $x=med\{a, b, c\}$. 

We note that, if for any $i=1, \ldots, k$ at most one of $a, b,c$ belongs to $X_i$ then we must have $x= med\{a, b, c\} \in A$, or in other words, the only cases where the median is not reflexive is when at least two of the vertices are in the same irreflexive component, so in the definition of $m$ all lines, except the first two,  apply to cases when  $u, v \in X_i$, for some $i=1, \ldots, k$. It is then easy to see that, using   Cases 1.-4. above, that  $m$ is defined for all $a, b, c\in V(G)$.

\begin{prop}
 The function $m$ is a majority function.
 \end{prop}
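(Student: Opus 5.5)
Since $m$ is declared symmetric, it suffices to show $m(y,y,z)=y$ for all vertices $y,z$ of $G$. I would also check along the way that $m$ is well defined, i.e. the value depends only on the multiset $\{a,b,c\}$. This holds because every line of the definition refers only to the reordering $u,v,w$ and to $x=med\{a,b,c\}$. The reordering is fixed by the multiset up to ties, and ties only arise between equal vertices or vertices playing symmetric roles, so they do not change the output. The key preliminary observation is that the median in $G_S$ satisfies $med\{y,y,z\}=y$: $y$ lies on a shortest path between $y$ and $y$, and on one between $y$ and $z$. Uniqueness of the median in $G_S$, which the paper asserts, then gives $x=y$.

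I would then split according to where $y$ and $z$ lie. If $y\in A$, then $x=y\in A$, and the triple is not in $B$. Indeed, triples in $B$ consist of $0_i$, $3_i$ and a vertex of $X_i$, and these are three distinct vertices, whereas a triple of the form $(y,y,z)$ has a repeated entry. So the third line applies and returns $x=y$. If $y\in X_i$ and $z\notin X_i$, we are in Case~4 with $u=v=y$ and $w=z$. Then $[u]=[v]=[y]=[x]$, so again the third line gives $y$. It remains to rule out that an earlier line fires first. The $B$ lines are excluded as above, and the $p_{X_i}$, $0_i/3_i$ and $k^t_w$ lines all require $[u]\neq[v]$, which fails here.

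If $y,z\in X_i$ with $z\ne y$, then either $y,z$ lie on a common path $P_t$ (Case~1), or the triple falls under Case~2, since two entries (the two copies of $y$) share a path. In Case~1 the ordering by distance to $p_{X_i}$ puts both copies of $y$ adjacent in the order. So either $\{u,v\}=\{y,y\}$, or $\{u,v\}=\{y,z\}$ with $w=y$.

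This last subcase is the main obstacle, because then $[u]$ and $[v]$ may differ, and the output is determined by the $k^t_w$ lines rather than the median line. I would handle it directly. Here $w=y\in P_t$, so $k^t_w=y$ and $[w]=[k^t_w]$, and the $k^t_w$ line returns $y$. If instead $[u]=[v]$, the third line returns $x=y$, because the colour class of $y$ is shared.

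In Case~2, the rule chooses the smallest-index path $P_t$ containing two of the entries, so that $u\in P_t$. I need to argue that the relevant $k^t$-ordering again places $y$ so that either $u=v=y$, or $w=y$ with $k^t_w=y$ or with $[w]=[k^t_w]$. The obstacle in this subcase is the situation $w=z$, $u=v=y$ when $[y]=[z]$ but $z$ is off the path. Here $[u]=[v]=[x]$ holds regardless of $z$, so the third line still gives $y$. I would write out the short table of subcases to confirm that no line other than the third or the $k^t_w$ line can fire.
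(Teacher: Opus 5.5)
Your proof is correct and is essentially the paper's argument: reduce by symmetry to $m(y,y,z)$, use $med\{y,y,z\}=y$, and observe that the only configuration with $[u]\neq[v]$ is the Case~1 ordering $u=z$, $v=w=y$, where the $k^t_w$ line returns $k^t_y=y$. Two small points. Your case split omits $z=y\in X_i$, which is immediate from Case~1 with $u=v=w=y$. In Case~2 the ordering is always $u=v=y$, $w=z$: since $y$ and $z$ share no path, $d(k^t_z,p_{X_i})<d(y,p_{X_i})$, so your hedged subcase analysis there collapses to a single line.
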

 \begin{proof}

Let $a, b\in V(G)$ be arbitrary, as $m$ is symmetric by definition, it is enough for us to show that $m(a, a, b)=a$.
We have $med\{a, a, b\}=a$. If $a\in A$ then we have $m(a, a, b)= med\{a, a, b\}=a$.
If $a\notin A$ then the vertices are in one of Cases 1., 2.  or 4., and  defining $u, v, w$ as described there we get $u=v=a$ or $v=w=a$. When  $[u]=[v]$  we obtain $m(a, a, b)= med\{a, a, b\}=a$. Suppose now that $[u]\neq [v]$, then we must have $u=b$ and $v=w=a$ and be in Case 1. 
Then we will have 
$$m(a, a, b)=m(u, v, w)=w=a.$$

When $a=b$ we get that $m$ is idempotent. 
\end{proof}

\begin{prop}\label{proofmaj}
The majority function $m$  preserves the edges of $G$, i.e is a polymorphism of the cactus $G$.
\end{prop}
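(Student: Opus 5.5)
We must show that if $(a,a'),(b,b'),(c,c')\in E(G)$ then $(m(a,b,c),m(a',b',c'))\in E(G)$. The plan is a case analysis driven by where the two triples sit relative to the reflexive component $A$ and the irreflexive components $X_1,\dots,X_k$ of $G_T$. Since $m$ is symmetric, we may reorder each triple freely, in particular into the $(u,v,w)$ order of Cases 0--4. Adjacent vertices in $G$ are either adjacent in $G_S$, or joined by one of the removed cycle edges (an edge $(1_i,2_i)$ of a triangle, an edge $(3_i,0_i)$ of a square with at most one reflexive vertex, or an edge of a reflexive triangle). So we first handle the case where all three edges lie in $G_S$. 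Afterwards we treat the removed edges separately, using that each such edge closes a short cycle whose vertices have controlled colours and distances to $p_{X_i}$.

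\textbf{Step 1: the median is almost edge-preserving on $G_S$.} We prove that if $(a,a'),(b,b'),(c,c')$ are edges of $G_S$, then $x=med\{a,b,c\}$ and $x'=med\{a',b',c'\}$ satisfy one of the following: they are equal, they are adjacent, or $x'=x^+$ (or vice versa). The argument is the standard one for medians in trees, which extends to $G_S$ because its cycles are only squares $0_i1_i2_i3_i$ with $0_i,3_i$ reflexive. Moving one coordinate one step moves each pairwise geodesic by at most one step. In particular, if $x,x'\in A$ then they are equal or adjacent, and since $A$ is reflexive and connected (loop-connectedness), this is an edge. This immediately settles the subcases in which both outputs come from the line ``$x$, if $x\in A$''.

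\textbf{Step 2: both triples have at least two entries in the same $X_i$.} Here the key observation is that along an edge colours flip: $[a']\ne[a]$ whenever both lie in the same $X_i$. Hence the relations $[u]=[v]$ and $[u]\neq[v]$ are preserved when passing from $(a,b,c)$ to $(a',b',c')$, provided the $(u,v,w)$ labelling is transported consistently. The main technical lemma to prove is therefore a \emph{labelling stability} claim: if $(u,v,w)$ is the ordering for $(a,b,c)$, then the corresponding neighbours $(u',v',w')$ form a valid ordering for $(a',b',c')$, except in a few boundary configurations. These boundary configurations arise when a vertex crosses a branching point $k^t_y$, when two distances to $p_{X_i}$ become tied, when the minimal path index $t$ changes, or when the Case 3 tie-break switches to the colour-class rule. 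With stability in hand, the lines of the definition pair up:
\begin{itemize}
\item ``$x$'' with ``$x^+$'' gives adjacency, by Step 1 and the colour flip;
\item ``$k^t_w$'' with ``$(k^t_w)^+$'' gives adjacency, because $k^t_{w'}\in\{k^t_w,(k^t_w)^+,\text{a neighbour}\}$ and the colour condition $[w]=[k^t_w]$ selects exactly one vertex of each colour.
\end{itemize}
Each boundary configuration must then be checked by hand. I expect this to be the main obstacle, especially Case 3 with coincident $k$'s and the transitions between Cases 1, 2 and 3, where the output can jump between lines of the definition. For these I would first try to show that in every such transition both outputs lie on the path $P_t$ at consecutive positions, or both equal $p_{X_i}$ or its irreflexive neighbour.

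\textbf{Step 3: mixed and boundary cases.} Next we treat the situations where one triple is in Case 0 and the other in Case 4, or where $w\notin X_i$ changes side. These yield outputs $p_{X_i}$ and $0_i/3_i$ paired with $x$ or $k^t_w$. Here the outputs are $p_{X_i}$ or a reflexive vertex on the square, and the other output is either reflexive and adjacent, or the irreflexive neighbour of $p_{X_i}$. Adjacency follows from the distance conditions $d(w,0_i)<d(w,3_i)$ together with the cactus property.

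The triples in $B$ are checked directly. Their neighbours have coordinates among $0_i,1_i,3_i,\dots$, and the colour rule $[u]=[2_i]\mapsto 0_i$, $[u]=[1_i]\mapsto 3_i$ is exactly what makes the output adjacent. This is where the removed edge $(2_i,3_i)$ matters.

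Finally, we handle the removed edges of triangles with a reflexive vertex and of squares with at most one reflexive vertex. Replacing $(1_i,2_i)$ by the path through $0_i$ changes distances by at most one. In each such configuration we verify that the outputs land in the cycle $C_i$ or at $0_i$, which is reflexive or adjacent to every vertex of $C_i$ involved. This uses the absence of irreflexive triangles and of squares with three loops.
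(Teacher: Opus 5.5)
Your outline follows the paper's own route. You split by where the triples sit, transport the labelling to the neighbours $u',v',w'$ and compare it with the re-sorted $\bar{u},\bar{v},\bar{w}$, use that colours flip along edges inside one $X_i$, and treat $B$, the squares and the triangles separately. As written, though, it only covers the routine configurations, and the one general lemma you state is false.

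Step~1 does not extend from trees to $G_S$. Take a square $C_i$ with reflexive $0_i,3_i$. The pairs $(0_i,3_i)$, $(1_i,2_i)$, $(2_i,3_i)$ are edges of $G_S$, yet $med\{0_i,1_i,2_i\}=1_i$ and $med\{3_i,2_i,3_i\}=3_i$ are antipodal on $C_i$. A cut-vertex argument gives the correct statement: $x$ and $x'$ are equal, adjacent, or antipodal on one of $C_1,\dots,C_q$. Every antipodal pair there contains $1_i$ or $2_i$, so your use of Step~1 when $x,x'\in A$ survives. Near these squares, however, median continuity settles nothing. This is exactly why $m$ has the special lines for $i\le q$: here $m(0_i,1_i,2_i)=0_i$, not the median $1_i$, and it is the rule $d(w,0_i)<d(w,3_i)$ that gives adjacency to $3_i$.

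The larger gap is that all the real work is postponed to the boundary configurations. Both your stability lemma and your pairing of definition lines (median lines with median lines, $k^t_w$ lines with $k^t_w$ lines) fail there. Take a path $p_{X_i},y_1,y_2,y_3,y_4$ in $X_i$, with $(u,v,w)=(y_4,y_2,y_1)$ and neighbours $(y_3,y_1,y_2)$.
\begin{itemize}
\item The transported labelling is not a valid ordering, although none of your listed exceptions occurs: no branching point, no tie, same path, not Case~3.
\item $z=y_2$ comes from the median line.
\item $[\bar{u}]=[y_3]\neq[y_2]=[\bar{v}]$, so $z'=y_1$ comes from the $k^t_w$ line. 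This is the paper's Case~1.1(2).
\end{itemize}
Similar switches occur in two further situations. One is $u=3_l$, $u'=0_l$ on squares with at most one loop; the removed edge joins vertices at $G_T$-distance $3$, so no continuity is available. The other is when the triangle edge $(1_j,2_j)$ moves a coordinate into another component. There the colour flip is unavailable, since $1_j$ and $2_j$ are both neighbours of $0_j$ and get the same colour.

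In each of these, adjacency holds for specific local reasons. Checking them is the content of the paper's Cases 1.1--3.4, 4.1--4.4 and the reordering subcases. You need that enumeration, or a genuinely uniform replacement for it, not the hope that the outputs end up consecutive on $P_t$.

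Finally, the claim in Step~3 that the outputs land in $C_i$ or at $0_i$ whenever a removed edge is used is false as stated. For example, $m(1_j,y,y)=y$ and $m(2_j,y',y')=y'$. A removed edge matters only when it changes the case or the ordering.
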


This will be proved in the next subsection.

\subsection{Proof of Proposition~\ref{proofmaj}}

Let  $(a, a'), (b, b'), (c, c')\in E(G)$ be arbitrary edges of the cactus graph $G$, and let $m(a, b, c)=z$ and $m(a', b', c')=z'$. We want to show that $(z, z')\in E(G)$. We use all the partitions and definitions in $G$ as at the start of this section, and look at all of the cases (0. through to 4.) one at a time.

\subsubsection{Case 0.}\label{case0}
 Suppose first that, for each $i=1, \ldots, k$, at most one of $a,b,c$ belong to $X_i$.  
Then we have $x=med\{a,b,c\} \in A$, and  either $(a, b, c)\in B$ and $z\in \{0_i, 3_i\}$ for some $i=1, \ldots, q$, or  $z=med\{a,b,c\}=x$.

Assume first that $(a, b, c)\notin B$, and $z=med\{a,b,c\}=x$. Furthermore start by assuming that  $a,b,c$ lay all on the same  path, then without loss of generality we can assume that $med\{a, b, c\}=a$, and so $z=a$.
If also $a',b',c'$ lay on a path, in $G_T$, then one of the following cases holds:
\begin{enumerate}
\item $med\{a', b', c'\}=a'$; in this case if $a'\in A$ we have $z'=a'$, yielding $(z, z')\in E(G)$.
When $a'\notin A$  
we must have $a=p_{X_i}$ and $a'\in X_i$, for some $i=1,\ldots, k$. If none of 
$b', c'$ belongs to $X_i$ then  $a', b', c'$ do not lay on the same path, as $med\{a', b', c'\}=a'$,  so at least one of $b', c'$ must also belong to $X_i$.

If  exactly  one of $b', c'$ also belongs to $X_i$, without loss of generality  assume that it is $b'$, so $b, b'\in X_i$. If $[a']=[b']$ then $z'=a'$, and if $[a']\neq [b']$ then $z'=p_{X_i}=a$.

If $b', c'\in X_i$ then we must have that $a, b, a', b'$ belong to a triangle $C_j$ with vertices $0_j, 1_j, 2_j$, and  exactly one reflexive vertex $a=0_j$.  We can  then have $b=1_j$ (if $b=2_j$ the argument is symmetric) and $a'=2_j=b'$ and  $c'$ belonging to the irreflexive component containing $b'=2_j$, which implies that  $z'=m(2_j, 2_j, c')=2_j$.
Thus we always get $(z, z')\in E(G)$.

\item $med\{a', b', c'\}=a$, which happens when $(a, b)\in E(G)$  and $a'=b, b'=a$ (or equivalently $a'=c$, $c'=a$)
; or when $a=p_{X_i}$ and $a'\in X_i$, for some $i=1,\ldots, k$ such that $p_{X_i}=0_j$, the reflexive vertex in a triangle with one reflexive vertex, $C_j$. 
\noindent In both cases, $a \in A$ so $m(a',b',c')=a$, and $z'=z=a$.

\item  $med\{a', b', c'\}=b'\notin A$, (if $b' \in A$ then $b'=a$, and we have considered this above) which happens when $a, a', b, b'$ belong to a triangle $C_j$ with vertices $0_j, 1_j, 2_j$, and  exactly one reflexive vertex $a=0_j$. Recall that $1_j$ and $2_j$ belong to different irreflexive components here. We can then have $a'=1_j=b$ and  $c'$ belonging to the irreflexive component containing $b'=2_j$, which implies that  $z=a=0_j$, $med\{1_j, 2_j, c'\}=2_j$ and 
$$z'=m(a', b', c')=m(1_j, 2_j, c')= \left\{\begin{array}{cl}
2_j, & {\rm if} \ [2_j]=[c'];\\
(2_j)^+= 0_j, & {\rm if} \ [2_j]\neq [c'],
\end{array}\right.$$
so we always get $(z, z')\in E(G)$.

\end{enumerate}

If $a',b',c'$ do not lay on a path, we must have  moved off the $a, b, c$ path via $x$, which implies that $med\{a',b',c'\}=x\in A$ and so  $z'=m(a',b',c')=x$, thus $(z, z')\in E(T).$

Assume now that $a,b,c$ do not lay on the same path. 
Then $med\{a,b,c\}=x \notin \{a,b,c\}$. If $a',b',c'$ continue to not lay on a path, then $med\{a',b',c'\}=x \in A$, hence $z'=m(a',b',c')=x$. If $a',b',c'$ join onto a path, they do so via the median and hence, $med\{a',b',c'\}=x$ again, giving us that $z'=m(a',b',c')=x$. Hence in all cases $(z, z')\in E(T)$.

We now look at the case when  $(a, b, c)\in B$. Then there exists $i=1, \ldots q$ such that two of $a,b,c$ equal $0_i, 3_i$ and the third one, which we call $u$, belongs to $X_i$.  This implies that $z=0_i$ when $[u]=[2_i]$ and $z=3_i$ when $[u]=[1_i]$.

Without loss of generality, assume that $a=0_i, b=3_i$ and $c\in X_i$. Suppose that $[c]=[2_i]$ (we will look at the case when $[c]=[1_i]$ in brackets as the differences are minimal).
We have $z=m(0_i, 3_i, c)=0_i$ ($z=m(0_i, 3_i, c)=3_i$), and $z'=m(0_i', 3_i', c')$.
If at least two of $0_i', 3_i', c'$ are any combination of $0_i$'s and $3_i$'s, we get $z'\in \{0_i,3_i\}$, yielding $(z, z')\in E(G)$.
The same happens if $0_i',3_i'\in V(G)\backslash X_i$ as $z'$ would be the median of all vertices $0_i', 3_i', c'$. The only cases left to consider are  when $0_i'$, $3_i'$ or both are in $X_i$: 
\begin{itemize}
\item If  $0_i'=1_i$ and $3_i'=2_i$ then either $c'=3_i$ ($c'=0_i$) or there exists a path in $G_T$ from $0_i$ to $c'$ containing at least $1_i$ (it may also contain $2_i$).

In the first case, as $[1_i]\neq[2_i]$, and $d(3_i,3_i)<d(3_i,0_i)$, $m(0_i',3_i',c')=3_i$ (as $[1_i]\neq[2_i]$, and $d(0_i,0_i)<d(0_i,3_i)$, $m(0_i',3_i',c')=0_i$). Hence $(z,z')\in E(G)$.

Otherwise, we are either in Case 1. or 2., and defining $\{u, v, w\}=\{0_i', 3_i', c'\}$ accordingly, we get that $w=1_i$, so we have that $[u]\neq[v]$, as $[c']\neq [2_i]$ ($[u]=[v]$, as $[c']=[2_i]$). Hence we get  $z'=m(1_i, c',2_i )=1_i$ which is adjacent to $0_i$ ($z'=m(1_i, c', 2_i)= 2_i$  which is adjacent to $z=3_i$).

\item     If $0_i'=1_i$, $3_i'\neq 2_i$, and $c' \in X_i$ ($c' \notin X_i$ has been considered above), then we have that two vertices are in $X_i$. As $[1_i]=[c']$, we get that $z'=m(1_i,c',3_i')\in \{x'=med\{1_i,c',3_i'\}, (x')^+$\} ($[c']\neq [1_i]$ so $m(1_i,c',3_i')\in \{0_i, 3_i\}$ which are both adjacent to $3_i$).  If $x'=1_i$ then $z'=1_i$, and if $x'=2_i$ then we get $z'=2_i^+=1_i$  so $(z, z')\in E(G)$. 

\item   If $0_i'\neq1_i$, $3_i'= 2_i$, and  $c' \in X_i$, then as $[2_i]\neq[c']$ , we get that $z'=m(0_i',c',2_i)\in \{0_i, 3_i\}$ depending if $0_i'$ is closer to $0_i$ or $3_i$. In either case $(z, z')\in E(G)$.

     (If $c' \in X_i$, $0_i'\neq1_i$ and $3_i'= 2_i$, then as $[c']=[2_i]$ we get $[u]=[v]$ and  $x'=med \{0_i', c', 2_i\}\in \{1_i, 2_i\}$. If $x'=1_i$ then $z'=1_i^+=0_i$ and if $x'=2_i$ we get $z'=2_i$. In both cases we get $(z, z')\in E(G)$.)
\end{itemize}

\subsubsection{Case 4.}\label{case4}
  
  Assume that exactly two of $a,b,c$ belong to  $X_i$, for some $i=1, \ldots, k$. Let $u,v \in \{a,b,c\} \cap X_i$ be such that $d(u, p_{X_i})\geq d(v, p_{X_i})$,  and $w\in\{a, b, c\}$ be such that $w\not\in X_i$.  

We then know that $$z=m(u,v,w)=\begin{cases}
    x, & \text{if } [u]=[v]=[x];\\
    x^+, & \text{if } [u]=[v]\neq[x];\\
    p_{X_i}, & \text{if \ } [u]\neq [v] \text{\ and\ }  i=q+1, \ldots, k; \\
0_i ,& \text{if \ }  [u]\neq[v],  d(w,0_i)<d(w,3_i),  \ \text{and\ }  i=1, \ldots, q;\\
3_i ,& \text{if \ }[u]\neq [v],  d(w,3_i)<d(w,0_i),\ \text{and\ } i=1, \ldots, q,\\
    \end{cases}$$
where $x =med\{a,b,c\} \in X_i$.

Note that if $ med\{a', b', c'\} \in A$ then we are in Case 0. and  the proof is symmetric to
what we did in Subsection~\ref{case0}.
 
So we can assume that: 
\begin{itemize}
    \item $med\{a', b', c'\}\in X_i$, and so at least two of $a', b', c'$ belong to $X_i$;  or  
    \item $med\{a', b', c'\}\in X_l$ for some $l\neq i$, with $X_i,X_l$ the two irreflexive components of a triangle with exactly one reflexive vertex.
\end{itemize}

In the latter case, letting $C_j$ be the triangle with reflexive vertex $0_j$ and irreflexive vertices $1_j, 2_j$, assuming that $X_i$ is the irreflexive component containing $1_j$ we must have that at least one of $a, b, c$ is equal to $1_j$.

If two (say $a$ and $b$) are equal to $1_j$, and $med\{a',b',c'\}=2_j$, then we get $m(a,b,c)=1_j$ and $m(a',b',c')\in \{0_j,2_j\}$ dependent on whether $[c']=[2_j]$ or not. Hence $(z,z')\in E(G)$.

If only one vertex (say $a$) is equal to $1_j$, then $m(a,b,c)\in \{1_j,0_j\}$, dependent on whether the other vertex in $X_i$ (say $b$) is in the same colour class as $a$. As $med\{a',b',c'\} =2_j$, we know that $c' \in X_l, $ so $m(a',b',c')
\in \{2_j,0_j\}$, dependent on whether $[c']=[2_j]$ or not. In all cases we get $(z, z')\in E(G)$.

We now assume that $med\{a', b', c'\}\in X_i$, and so at least two of $a', b', c'$ belong to $X_i$.

 Let $u',v',w' \in \{a',b',c'\}$ be such that  $(u, u'),(v,v'),(w,w')\in E(T)$, and let  $\bar{u},\bar{v},\bar{w} \in \{u',v',w'\}$ be the vertices 
 such that the ordering with respect to the appropriate indexed path $P_{\alpha}$ is $d(k_{\bar{u}}^{\alpha}, p_{X_i})\geq d(k_{\bar{v}}^{\alpha}, p_{X_i}) \geq d(k_{\bar{w}}^{\alpha}, p_{X_i})$ if all $\bar{u}, \bar{v}, \bar{w}\in X_i$, or  if $\bar{w}\notin X_i$ 
 the other two satisfy $d(\bar{u}, p_{X_i})\geq d(\bar{v}, p_{X_i})$. Recall that we consider distances in $G_T$.

Start by assuming that $\bar{u}=u'$ and $\bar{v}=v'$, or $\bar{u}=v'$ and $\bar{v}=u'$. Which implies that $\bar{w}=w'$. We look at all possibilities for the value of $z$:

\noindent\boxed{{\rm Case}\ 4.1.} Suppose first that $z=x=med\{u, v, w\}$, so that we have $[u]=[v]=[x]$.

If $x \in \{a,b,c\}$, then $x=v$ and $\exists t=1, \ldots, n_i$ such that  $u,v\in P_t$. 
If $\exists t'=1, \ldots, n_i$ such that  $\bar{u},\bar{v}\in P_{t'}$ then $med\{a', b', c'\}= \bar{v}$, as $[u]=[v]$ and so $[\bar{u}]=[\bar{v}]$, thus $z'=\bar{v}$ implying that $(z, z')\in E(T)$.  
If $\bar{u}$ and $\bar{v}$ belong to different paths in $X_i$, then we must have $med\{a',b',c'\}=v$. Since $[\bar{u}]=[\bar{v}]\neq[v]$, we have that $z'= m(a',b',c')=v^+$, yielding $(z, z')\in E(T)$.

If  $x \notin \{a,b,c\}$ then $u,v,w$ are not on the same path. We have then that $[u]=[v]=[x]$. 
If $u',v',w'$ are also not on the same path, then we have $med\{u', v', w'\}=x=med\{u, v, w\}$ and $[\bar{u}]=[\bar{v}]\neq[x]$. It follows that $z'=m(a',b',c')=x^+$, and consequently $(z, z')\in E(T)$.
Note that if  $u',v',w'$ belong to  the same path, then we must have $\bar{v}=x$, but $[v]=[x]$, a contradiction.

\noindent \boxed{{\rm Case}\ 4.2.} Assume that $z=x^+$, so that we have $[u]=[v]\neq[x]$.

 We then know that $u$, $v$ and $w$ do not lay on a path. Hence, if $a',b',c'$ are still not on a path then $x=med\{a', b', c'\}$, and $[u']=[v']=[x]$.
If $u', v', w'$ belong to the same path, the vertices ``moved'' via the median. We know then that $v'=x$, and so $[u']=[v']=[x]$. In both cases we have  $z'=m(a',b',c')=x$, and so  $(z, z')\in E(T)$.

\noindent \boxed{{\rm Case}\ 4.3.} Assume  that $z=p_{X_i}$, for some $i=q+1, \ldots, k$, so we have  $[u]\neq [v]$.
We then must have  $[\bar{u}]\neq [\bar{v}]$ and so $z'=p_{X_i}$ if $\bar{w}\notin X_i$, and $z'=\bar{w}$ when $\bar{w}\in X_i$.
In the latter case we must have $w=p_{X_i}$, thus we always have $(z, z')\in E(G)$.

\noindent \boxed{{\rm Case}\ 4.4.} Finally, assume that $z=0_i$ for some $i=1, \ldots, q$, so we have $u, v$ in an irreflexive component that contains the irreflexive vertices of a square with exactly two reflexive vertices $0_i, 3_i$. Also, we have $[u]\neq[v]$ and   $d(w,0_i)<d(w,3_i)$.
Then we have $[\bar{u}]\neq [\bar{v}]$ and we obtain  $z'\in \{0_i, 3_i\} $ when $\bar{w}\notin X_i$ (note that we get $z'=3_i$ only when $w=0_i$ and $\bar{w}=3_i$), and $z'=\bar{w}$ when $\bar{w}\in X_i$. The latter case occurs when $w=0_i$, since we are assuming that $d(w, 0_i)< d(w, 3_i)$ we cannot have $w=3_i$. Thus $\bar{w}=1_i$ and we have $(z, z')\in E(G)$.

\noindent The case when $z=3_i$ can be solved in a similar way.

\bigskip

Suppose now that $\bar{u}, \bar{v}, \bar{w}$ do not keep the same ``order'' as $u', v', w'$. We have the following possibilities:
\begin{enumerate}
\item $\bar{u}=u', \bar{v}=w'=v$ and $\bar{w}=v'=w$, with $(v, w)\in E(G)$. As $w\notin X_i$ this can only happen when $w\in \{0_i, 3_i, p_{X_i}\}$, and in this case, we have $z=v$ if $[u]=[v]$ and $z=w$ if $[u]\neq [v]$.  We then get 
$z'=m(u', v, w)=v$ when $[u']=[v]$ and $z'=w$ if $[u']\neq [v]$. In both cases we get $(z, z')\in E(G)$.
\end{enumerate}
\noindent When $v, w $ are the vertices of a triangle with exactly one reflexive vertex $0_j$ and irreflexive vertices $1_j, 2_j$ we have the following extra case, recalling that $med\{a',b',c'\}\in X_i$. Without loss of generality, let  $X_i$ be the irreflexive component containing $1_j$, and recall that $2_j\notin X_i$. Let $X_{l}$ be the irreflexive component containing $2_j$.

\begin{enumerate}\setcounter{enumi}{1}
\item $\bar{w}=v', \bar{v}=w'$ and $\bar{u}=u'$, with $u\in X_i, v=1_j, w=2_j$ and $v'\in \{0_j,2_j\}, w'=1_j$.
When $[u]=[v]$ we get $z=med\{u, 1_j, 2_j\}=1_j$ and, because $[\bar{u}]\neq [\bar{v}]$ we get $z'=m(v',1_j, u')=0_j$.
When $[u]\neq [v]$ we get $z=0_j$ and as $[\bar{u}]=[\bar{v}]$ and $med\{1_j, u', v'\}=1_j=\bar{v}$ we have $z'=1_j$. In both cases we obtain $(z, z')\in E(G)$.

\end{enumerate}

\noindent When $u, w $ are the vertices of a square with exactly one reflexive vertex $0_j$ and irreflexive vertices $1_j, 2_j, 3_j$ we have the following extra case.

\begin{enumerate}\setcounter{enumi}{2}

\item 
 $\bar{w}=w=u'$,  $\bar{u}=u=w'$ and $\bar{v}=v'$, with  $(u, w)\in E(G)$. This case happens when $u=3_j$, $w=0_j$ and $v$ is such that $d(k_v^{\alpha}, p_{X_i})\le d(k_{3_j}^{\alpha}, p_{X_i})$ in $G_T$ (note that in this case we must have $k_{3_j}^{\alpha}=3_j$, and $k_v^{\alpha}\in \{1_j,2_j\}$). 

Let $x=med\{0_j, 3_j, v\}= k_v^{\alpha}$.
If $[3_j]=[v]=[x]$ then we must have $k_v^{\alpha}=1_j$ and so  $z=m(0_j, 3_j, v)=1_j$.
If  $[3_j]=[v]\neq [x]$ then we must have $k_v^{\alpha}=2_j$ and  $z=m(0_j, 3_j, v)= x^+=1_j$. In both cases we have $[3_j]\neq [\bar{v}]$ which implies that $z'=p_{X_i}=0_j$, thus $(z, z')=(1_j, 0_j)\in E(G)$.
If $[3_j]\neq [v]$ then $z= 0_j$, and we have $[\bar{u}]=[3_j]=[\bar{v}]$. If $[med\{0_j, 3_j, \bar{v}\}]=[3_j]$ (which happens when $med\{0_j, 3_j, \bar{v}\}=1_j$) we get $z'= med\{0_j, 3_j, \bar{v}\}=1_j$. If  $[med\{0_j, 3_j, \bar{v}\}]\neq [3_j]$ 
(this happens when $med\{0_j, 3_j, \bar{v}\}=2_j$) we get $z'=(med\{0_j, 3_j, \bar{v}\})^+= 2_j^+=1_j$. Thus, in both cases $(z, z')=(0_j, 1_j)\in E(G)$.

\end{enumerate}

\noindent When $u, w $ are the vertices of a square with exactly two reflexive vertices $0_j, 3_j$ and irreflexive vertices $1_j, 2_j $ we have the following extra cases.

\begin{enumerate}\setcounter{enumi}{3}  
\item $\bar{w}=u'=w, \bar{u}=w'=u$ and $\bar{v}=v'$. This happens when $w=3_j, u=2_j$ and $k_v^t=1_j$ with $P_t$ the path beginning with the edges $(0_j,1_j),(1_j,2_j),(2_j,d) \in E(G)$ (with $d \in X_i$). 
We then have 
    $$z=m(2_j, 3_j, v)=\left\{ \begin{array}{cl}
    2_j, &{\rm if}\ [v]=[2_j];\\
    3_j, &{\rm if}\ [v]\neq [2_j],\end{array}\right.
    $$
    and $$z'=m(3_j, 2_j, v')=\left\{ \begin{array}{cl}
    3_j, &{\rm if}\ [v']\neq [2_j]\Leftrightarrow [v]=[2_j];\\
    2_j, &{\rm if}\ [v']=[2_j] \Leftrightarrow [v]\neq [2_j],\end{array}\right.$$
    as $d(w, 3_j)=d(\bar{w}, 3_j)< d(\bar{w}, 0_j)=d(w, 0_j)$.

\item $\bar{w}=u', \bar{v}=w'$ and $\bar{u}=v'$. This can happen when $u=2_j, w=0_j$ and $k_v^t=1_j$. Then, as $d(w, 0_j)< d(w, 3_j)$, we  have always have 
$z=m(2_j, 0_j, v)=0_j$ independently of $[v]$ being equal to $[2_j]$ or not, since $(1_j)^+=0_j$. Also
$$z'=m(3_j, 1_j, v')=\left\{ \begin{array}{cl}
    1_j, &{\rm if}\ [v']=[1_j];\\
    3_j, &{\rm if}\ [v']\neq [1_j], \end{array}\right.$$
implying that we always have $(z, z')\in E(G)$.
    
\item $\bar{u}=u', \bar{v}=w'$ and $\bar{w}=v'$. This may occur when $w=3_j$, $k^t_v=1_j$ and $k^t_u=2_j$.
We then have $med\{u,v,w\}=1_j$  $$z=m(u, v, 3_j)=\left\{ \begin{array}{cl}
    1_j, &{\rm if}\ [u]=[v]=[1_j];\\
    0_j, &{\rm if}\ [u]=[v]\neq[1_j];\\
    3_j, &{\rm if}\ [u]\neq [v],\end{array}\right.
    $$
     as $d(w,3_j)<d(w,0_j)$. 

     We can see that $med\{u',v',2_j\}=2_j$. Hence $$z'=m(u',v', 2_j)=\left\{ \begin{array}{cl}
    2_j, &{\rm if}\ [\bar{u}]=[\bar{v}]=[2_j] \Leftrightarrow [u]=[v]=[1_j] ;\\
    1_j, &{\rm if}\ [\bar{u}]=[\bar{v}]\neq[2_j] \Leftrightarrow [u]=[v]\neq[1_j] ;\\
    0_j, &{\rm if}\ [\bar{u}]\neq [\bar{v}]\Leftrightarrow [u]\neq[v],  \end{array}\right.$$ as $d(\bar{w},0_j)<d(\bar{w},3_j)$, implying that we always get $ (z,z')\in E(G)$.
     
\end{enumerate}

\subsubsection{Case 1.}\label{case1}

Assume that  $a, b, c\in P_{t}\subseteq X_i$ for some  $i=1, \ldots, k$  and some $t=1, \ldots, n_i$   we  define $u, v, w$ such that $d(u, p_{X_i})\geq d(v, p_{X_i})\geq d(w, p_{X_i})$;

In this case we have 
$$z=m(u,v,w)=\begin{cases}
    v, & \text{if } [u]=[v];\\
    w, & \text{if } [u]\neq[v],
    \end{cases}$$

since $w\in X_i$ and $x=med\{a,b,c\}=v$ (and $w=k_w^j$).

If $med\{a', b', c'\}\in A$ or exactly two of $a', b', c'$ belong to $X_i$ (or to another irreflexive component $X_l$) then we are in Cases 0. and 4., respectively, and the proof is symmetric to what we did in Subsections~\ref{case0} and \ref{case4}. So we can assume that $a',b', c'\in X_i$. 

 Let $u',v',w' \in \{a',b',c'\}$ be such that  $(u, u'),(v,v'),(w,w')\in E(G)$, and let  $\bar{u},\bar{v},\bar{w} \in \{u',v',w'\}$ be the vertices such that the ordering with respect to the appropriate indexed path, $P_{\alpha}$, is $d(k_{\bar{u}}^{\alpha}, p_{X_i})\geq d( k_{\bar{v}}^{\alpha}, p_{X_i}) \geq d(k_{\bar{w}}^{\alpha}, p_{X_i})$. For each of the cases  $P_{\alpha}$ will be the fixed lowest index path chosen accordingly.

\noindent\boxed{\text{Case}\ 1.1.} Suppose first that $z=m(a, b, c)=m(u, v, w)=v$. So we have $[u]=[v]$. Looking at the order of $u', v', w'$ compared with $u, v, w$ we have the following possibilities:
\begin{enumerate}
\item  $\bar{u}=u', \bar{v}=v'$ (or $\bar{v}=u', \bar{u}=v'$) and $\bar{w}=w'$; in this case we have $[\bar{u}]=[\bar{v}]$ and $med\{\bar{u}, \bar{v}, \bar{w}\}\in \{\bar{v}, k_{\bar{v}}^{\alpha}\}$.  
When $\bar{v} \in P_{\alpha}$ we have  $med\{\bar{u}, \bar{v}, \bar{w}\}=\bar{v}$, and so $z'=\bar{v}$. When $\bar{v} \notin P_{\alpha}$ we have  $med\{\bar{u}, \bar{v}, \bar{w}\}= k_{\bar{v}}^{\alpha}$. Because $u, v, w$ are all on the same path, if the median of $\{\bar{u}, \bar{v}, \bar{w}\}$ is not $\bar{v}$ it must be $v=k_{\bar{v}}^{\alpha}=k_{\bar{v}}^t$. In this case $[\bar{v}]\neq [med\{\bar{u}, \bar{v}, \bar{w}\}]$ so we get $z'=m(\bar{u}, \bar{v}, \bar{w})= (k_{\bar{v}}^{\alpha})^+=v^+$, yielding $(z, z')\in E(T)$.

\item $\bar{u}=u', \bar{v}=w'=v$ and $\bar{w}=v'$; here  $(v, w)\in E(G)$ and 
 $[u']\neq [v]$, so $[\bar{u}]\neq[\bar{v}]$. Also since $u, v, w\in P_t$  we must have  that $\bar{u}, \bar{v} \in P_{\alpha}$, which implies that   $k^{\alpha}_{\bar{w}} =\bar{w}$.  It follows that $z'=m(a',b',c')=(k^{\alpha}_{\bar{w}})=\bar{w}=v'$, and we have $(z, z')\in E(T)$.
\end{enumerate}

\noindent\boxed{\text{Case}\  1.2.} Suppose now that   $z=m(a, b, c)=m(u, v, w)=w$. So we have $[u]\neq [v]$, and we look at the order of $u', v', w'$ in relation to $u, v, w$:
\begin{enumerate}
\item  $\bar{u}=u', \bar{v}=v'$ (or $\bar{v}=u', \bar{u}=v'$) and $\bar{w}=w'$; in this case we have $[\bar{u}]\neq[\bar{v}]$. As we are assuming that $u', v', w'\in X_i$  we must have that either $\bar{w}\in P_{\alpha}$ or $w=k_{\bar{w}}^{\alpha}$.  If $\bar{w}=w'\in P_{\alpha}$ we get  $[\bar{w}]=[k_{\bar{w}}^{\alpha}]$ implying that  $z'=\bar{w}=k_{\bar{w}}^{\alpha}$.
If $\bar{w}\notin P_{\alpha}$ then we get $k_{\bar{w}}^{\alpha}=w$, and $[\bar{w}]\neq [k_{\bar{w}}^{\alpha}]$ implying that  $z'=(k_{\bar{w}}^{\alpha})^+= w^+$.
In both cases we get that $(z, z')\in E(T)$.

\item  $\bar{u}=u', \bar{v}=w'=v$ and $\bar{w}=v'$; since $[u]\neq [v]$ we have $[u']= [v]$, so $[\bar{u}]=[\bar{v}]$. From $v=med \{u, v, w\}$ and $\bar{v}=v$ we get that $med \{\bar{u}, \bar{v}, \bar{w}\}= \bar{v}$, thus $z'=\bar{v}$. Since this only happens if $(v, w)\in E(T)$ we get $(z, z')\in E(T)$.
\end{enumerate}

We have two more cases that can occur  when $u, w$ are vertices of  an irreflexive square $C_l$.  We look at these for Cases 1.1. and 1.2. together.

\noindent The first is when
 $\bar{u}=u', \bar{v}=w'=3_l$ and $\bar{w}=v'$.
 
Here  $w\in \{0_l, 2_l\}$ and we obtain 
$$z'=m(\bar{w}, 3_l, \bar{u})= m(v', 3_l, u') =\left\{\begin{array}{cl}
    \bar{w}, & {\rm if} \ [\bar{u}]\neq [3_l]; \\
    3_l, & {\rm if} \ [\bar{u}]= [3_l].\end{array}\right.$$

    If $v=1_l$ then $[u]=[v]\Leftrightarrow [u]=[1_l](=[3_l])\Leftrightarrow [u']\neq [3_l]\Leftrightarrow [\bar{u}]\neq [3_l]$, and
    $[u]\neq [v]\Leftrightarrow [u]=[2_l]\Leftrightarrow [\bar{u}]=[3_l]$. So in this case we have $z=v \Leftrightarrow z'=\bar{w}=v'$ or $z=0_l\Leftrightarrow z'= 3_l$. 

    If $v=2_l$ then we,  similarly,  get $[u]=[v]\Leftrightarrow [u]=[2_l]\Leftrightarrow [\bar{u}]=[3_l]$ and $[u]\neq [v]\Leftrightarrow [u]=[3_l]\Leftrightarrow [\bar{u}]\neq [3_l]$, which implies that we get $z= v=2_l \Leftrightarrow z'= 3_l$ or $z=0_l\Leftrightarrow z' =\bar{w}=v'\in \{1_l, 3_l\}$.  

    If $v=3_l$ then we must have that $u=v$. Hence $z=3_j$, and $[u]=[v]\Leftrightarrow [\bar{u}]\neq[\bar{v}]$, giving $z'=\bar{w}=v'$.
    
    Thus, in all cases we obtain $(z, z')\in E(G)$.

The second is when $\bar{u}=v'$, $\bar{w}=u'=w=0_l$, and $\bar{v}=w'=u=3_l$.

When $[u]=[v]$ (Case 1.1.), recall that $med \{u, v, w\}=v$.
Since $[u]=[v]$ we must have  $z=v=1_l$. It then follows that  $[\bar{u}]=[v']\neq [\bar{v}]=[u]$  and $z'=m(\bar{u}, \bar{v}, \bar{w})=w=0_l$ because $\bar{w}=k_{\bar{w}}^{\alpha}$. Thus $(z, z')=(1_l, 0_l)\in E(G)$.

When $[u]\neq[v]$ (Case 1.2.) we must have  $z=w=0_l$, which implies that $v=2_l$. It then follows that $[\bar{v}]=[\bar{u}]$, and since $med\{\bar{u}, \bar{v}, \bar{w}\}=med \{0_l, 3_l, v'\}=v=2_l$ we have that $z'=m(\bar{u}, \bar{v}, \bar{w})=v^+=(2_l)^+=1_l$. Thus $(z, z')=(0_l, 1_l)\in E(G)$.

\subsubsection{Case 2.}

Let $a, b, c \in X_i$ but such that  only two belong to the same path $P_t$, for some $i=1, \ldots, k$ and $t=1, \ldots n_i$. Choose $t$ to be the smallest possible index satisfying this, and define $u, v, w$ such that  $d(k_u^t, p_{X_i})\geq d(k_v^t, p_{X_i})\geq d(k_w^t, p_{X_i})$.
Recall that $u=k_{u}^t$, so either $v\notin P_{t}$ and $v\neq k_v^t$,  or $w\notin P_{t}$ and $w\neq k_w^t$.

With $x=med\{a, b, c\}$, we have the following possibilities:

 \begin{equation*}
   z= m(a,b,c)=m(u,v,w)=\begin{cases}
x ,& \text{if \ }  [u]=[v]=[x]; \\
x^+,& \text{if \ } [u]=[v]\neq[x];  \\
k^t_w, & \text{if \ } [u]\neq [v] \text{\ and \ } [w]=[k^t_w]; \\
(k^t_w)^+, & \text{if \ } [u]\neq [v] \text{\ and \ } [w]\neq[k^t_w]. \\
\end{cases}
\end{equation*}

 Let  $u',v',w'\in \{a', b', c'\}$ be such that $(u, u'),(v, v'),(w,w')\in E(T)$. If $med\{u', v', w'\}\notin X_i$ or only two of $u', v', w'$ are in $X_i$ then we are in Case 0. or Case 4. 
 and the proof follows symmetrically to what was done in Subsections~\ref{case0} and \ref{case4} respectively. So we can assume that $u', v', w'\in X_i$ and that they do not all belong to the same path, otherwise we would be in Case 1. and the proof would follow like in Subsection~\ref{case1}.  Let  $\bar{u},\bar{v},\bar{w} \in \{u',v',w'\}$  be such that the ordering with respect to the appropriate indexed path, $P_{\alpha}$, is $d(k_{\bar{u}}^{\alpha}, p_{X_i})\geq d( k_{\bar{v}}^{\alpha}, p_{X_i}) \geq d(k_{\bar{w}}^{\alpha}, p_{X_i})$. 

\bigskip

\noindent\boxed{\rm{Case}\ 2.1.} Suppose first that  $z=m(a, b, c)=x=med\{a,b,c\}$. So we know that $[u]=[v]=[x]$. 

If $v\notin P_t $ then $x=k_v^t= med\{u, v, w\}$. As $[u]=[v]$ these vertices must be at distance at least two from each other and because $[v]=[k_v^t]$ the vertex $v$ must be at distance at least two from the path $P_t$. 
So either we have  $\bar{u}=u'$, $\bar{v}=v'$, and $\bar{w}=w'$; or $\bar{w}=u'=0_l, \bar{v}=v', \bar{u}=w'=3_l$, with $\bar{w}, \bar{u}$ vertices of an irreflexive square $C_l$.
In the former case  we have $[\bar{u}]=[\bar{v}]$ and $med\{\bar{u}, \bar{v}, \bar{w}\}=x$, implying that $[\bar{u}]=[\bar{v}]\neq [x]$. Thus $z'= m(\bar{u}, \bar{v}, \bar{w})= x^+$, and  $(z, z')\in E(G)$. Note that this holds even when we go into the case with non distinct $k^{\beta}$$  's$ for all paths $P_\beta$ as in Figure~\ref{fig:casethreept2}.
And in the latter case we have $P_t$ containing the vertices $0_l, 1_l, 2_l, 3_l$, and to ensure that $[u]=[v]=[k_v^j]$ we must have that $k_v^j=1_l$.   It then follows, as above, that $k_{\bar{v}}^j=1_l$ and $med\{u, v, w\}= med \{\bar{u}, \bar{v}, \bar{w}\}=1_l$, implying that $z=1_l$. As $\bar{u}=u$ we then have $[\bar{u}]\neq [\bar{v}]$, yielding $z'= \bar{w}$, as $\bar{w}\in P_{\alpha}$.  Hence $z'=\bar{w}=0_l$ and $(z, z')\in E(G)$.

If $v\in P_t$, and so $w\notin P_t$, we have $x= med\{u, v, w\}=v$.  As $[u]=[v]$ these vertices must be at distance at least two from each other or be equal. If $\bar{v}=v'$ and $\bar{w}=w'$ then $med\{\bar{u}, \bar{v}, \bar{w}\}\in \{ \bar{v}, v\}$, and so we have $z'=m(\bar{u}, \bar{v}, \bar{w})\in \{\bar{v}, v^+\}$, 
thus $(z, z')\in E(G)$. If this is not the case, then we have one of the following: 

\begin{enumerate}
\item $\bar{w}=v', \bar{v}=w'$ and $\bar{u}=u'$. This can happen when $k_w^t=k_{\bar{v}}^{\alpha}=v'=\bar{w}$, which implies that $med\{\bar{u}, \bar{v}, \bar{w}\}=v'=\bar{w}$. Since $\bar{w}=k_{\bar{w}}^{\alpha}$ we will always get $z'=m(\bar{u},\bar{v}, \bar{w})=\bar{w}=v'$, thus $(z, z')\in E(G)$. 
\end{enumerate}
 When the path $P_t$ contains the vertices $0_l, 1_l, 2_l, 3_l$ of an irreflexive square $C_l$ we can also have: 

 \begin{enumerate}\setcounter{enumi}{1}
 
 \item $\bar{w}=u'=0_l$, $\bar{v}=w'$ and $\bar{u}=v'$, with $u=3_l$. 
In this case because $[u]=[v]=[x]$ and $k^t_w=0_l$ we must have $v=1_l$ if $v\in P_t$ or $k^t_v=1_l$ otherwise,  so $z=m(w, 3_l, 1_l)=1_l$. Since $k_{\bar{v}}^{\alpha}=k_w^t=0_l$ we obtain $med \{\bar{u}, \bar{v}, \bar{w}\}=\bar{w}=0_l$. It follows that  we always get $z'=m(\bar{u}, \bar{v}, \bar{w})=0_l$, thus $(z, z')=(1_l, 0_l)\in E(G)$.

\item $\bar{w}=u'=0_l$, $\bar{v}=v'$ and $\bar{u}=w'$. This can happen when $u=3_l, v=1_l$ and $u'=v'=0_l$, which implies that $z'=m(w', 0_l, 0_l )=0_l$. Since $z=m(w, 3_l, 1_l)=1_l$, as $med\{1_l, 3_l, w\}=1_l$, we get that $(z, z')=(1_l, 0_l)\in E(G)$.

\end{enumerate}

\bigskip

\noindent\boxed{\rm{Case}\ 2.2.} Suppose now  that $m(a,b,c)=m(u, v, w)=x^+$, where $x=med\{a,b,c\}$. So we know that $[u]=[v]\neq[x]$.
As $[u]=[v]$ these vertices must be at distance at least two from each other. Also, since $[v]\neq [x]$ we must have that $v\notin P_t$ as otherwise this would be the median point. It follows that $w\in P_t$, and $x=k_v^t$.

If $\bar{u}=u', \bar{v}=v'$ then we have $med\{u', v', w'\}= k_v^t= x$ and  $[\bar{u}]=[\bar{v}]=[x]$, yielding $z'=m(a', b', c')=x$. Thus $(x^+, z')=(x^+, x)\in E(G)$.

If this is not the case then we can have one of the following cases:
\begin{enumerate}

\item $\bar{v}=w',\bar{u}=u'$ and $\bar{w}=v'$; when $P_\alpha=P_t$ (here we abuse notation as we only require these paths to coincide from the vertex $u$ to $P_{X_i}$)
, this case happens when  $w=k_v^t$, so $x=w$. 
We can then have $med\{u', v', w'\}= w'=\bar{v}$. Because $[v]\neq[x]$ we have  $[v]\neq [w]$ and  $[v']\neq [w']$ so 
$[u']\neq [w']\Leftrightarrow [\bar{u}]\neq [\bar{v}]$.
From  $k_{\bar{w}}^{\alpha}= k_{\bar{w}}^t= k_v^t=w$ we obtain   $[k_{\bar{w}}^{\alpha}]=[w]\neq [w']$, implying that  $[k_{\bar{w}}^{\alpha}]=[v']=[\bar{w}]$. It then follows that $z'=m(a', b', c')=k_{\bar{w}}^{\alpha}=w$, hence $(x^+, z')=(w^+, w)\in E(T)$.

When $P_\alpha\neq P_t$, the three vertices would be on different paths with non distinct $k^{\beta}$$'s$ for any path $P_\beta$. Note that this case cannot occur as $[\bar{u}]\neq [\bar{v}]$.

\item  $\bar{u}=w'$ and $ \bar{w}=u'$; this happens when $k_v^t=w=u'=\bar{w}$, $u=w'=\bar{u}$ and $\bar{v}=v'$. So that $k_{\bar{w}}^{\alpha}=k_{\bar{w}}^t=\bar{w}$.
In this case we have $x=med\{u, v, w\}= med\{u', v', w'\}=w=u'=\bar{w}$, and $[\bar{u}]=[u]= [v]$, and as $\bar{v}=v'$, we have $[v]\neq [\bar{v}]$, so $[\bar{u}]\neq [\bar{v}]$. Yielding $z'=m(u', v', w')= k_{\bar{w}}^{\alpha}=\bar{w}=x$, hence $(x^+, z')=(x^+, x)\in E(T)$. Note that in this case, from $u$ to $P_{X_i}$, $P_\alpha= P_t$.

\end{enumerate}
Note that $\bar{u}=v'$ and $\bar{v}=u'$ only happens when $u'=v'=x$,  since $[u]=[v]$. This implies  that $z'=m(x, x, w')=x$, but we could relabel vertices  as $\bar{u}=u'$ and $\bar{v}=v'$, so this was considered above. 
When $P_j$ is the path containing the vertices $0_l, 1_l, 2_l, 3_l$ of an irreflexive square $C_l$ we can also have the following cases:

\begin{enumerate}\setcounter{enumi}{2}

\item $\bar{w}=w'$, $\bar{v}=u'=0_l$ and $\bar{u}=v'$; this  might happen when $u=3_l$ and $k_v^t\in \{0_l, 2_l\}$, ($k^t_v \neq 1_l $ as $[u]\neq[x]$) which implies that $z\in \{(0_l)^+, 1_l\}$. In both cases we have $[u']=[v']\Leftrightarrow [\bar{u}]=[\bar{v}]$, and $med\{\bar{u}, \bar{v}, \bar{w}\}=0_l=\bar{v}$. Thus $z'=0_l$, implying that $(z, z')\in E(G)$.

\item $\bar{u}=w'=3_l$, $\bar{v}=v'$, and $\bar{w}=u'=0_l$, which can happen when $u=3_l, w=0_l$ and $k_v^t\in \{0_l,2_l\}$, yielding $z\in \{(0_l)^+, 1_l\}$. In this case we have $[u]=[3_l]=[\bar{u}]=[v]$ , so $[\bar{u}]\neq [\bar{v}]$. Since $\bar{w}=0_l=k_{\bar{w}}^{\alpha}$ we get $z'=m(\bar{u}, \bar{v}, \bar{w})=0_l$, thus $(z, z')\in E(G)$.
\end{enumerate}

\bigskip 

\noindent\boxed{\rm{Case}\ 2.3.} Suppose now that $m(a,b,c)=m(u, v, w)=k_w^t$. So we know that $[u]\neq [v]$ and $[w]=[k_w^t]$.

\begin{enumerate}
\item If $\bar{u}=u', \bar{v}=v'$, and $\bar{w}=w'$ (or $\bar{v}=u', \bar{u}=v'$, and $\bar{w}=w'$) then  $[\bar{u}]\neq [\bar{v}]$.

When $w\notin P_t$, because $[w]=[k_w^t]$ we must also have $\bar{w}\notin P_t$, and so $k_{\bar{w}}^{\alpha}= k_{w}^t$, which implies that $[\bar{w}]\neq [k_{\bar{w}}^{\alpha}]$. We then obtain $z'=m(a', b', c')= (k_{\bar{w}}^{\alpha})^+= (k_w^t)^+$, which implies that $(k_w^t, 
z')=(k_w^t, (k_w^t)^+)\in E(T)$.

When $w\in P_t$ then $k_w^t=w$, and either $\bar{w}\in P_t$ which would imply that $k_{\bar{w}}^{\alpha}=k_{\bar{w}}^t=\bar{w}$, and so $z'=m(a', b', c')=\bar{w}$; 
or $\bar{w}\notin P_t$ which would imply that 
$k_{\bar{w}}^{\alpha}=k_{\bar{w}}^t=w$, and so  $z'=m(a', b', c')= w^+$. In both cases we have $(k_w^t, z')\in E(T)$.

\item If $\bar{u}=u', \bar{v}=w'$, and $\bar{w}=v'$ then we must have $v\notin P_t$,  
$w=k_v^t=k_w^t$, and $(w, v)\in E(T)$. It then follows that $[\bar{u}]= [\bar{v}]\neq [\bar{w}]$ and $med\{\bar{u}, \bar{v}, \bar{w}\}= \bar{w}$. Then $z'=m(a', b', c')= \bar{w}^+ = (k_w^t)^+$. Thus $(z, z')=(k_w^t, (k_w^t)^+)\in E(T)$.

\item If $\bar{u}=v'$, $\bar{v}=w'$ and $\bar{w}=u'$ then $w=k^t_v=k^t_w \in P_t$ and $w'\notin P_t$. We can see that $z=w=k^t_w$ and $med\{u',v',w'\}=w'$. When $[\bar{u}]=[\bar{v}]$, then $z'=w'$. When $[\bar{u}]\neq[\bar{v}] \Leftrightarrow [\bar{w}]\neq[k^t_w]$, and as $k^t_w=k^\alpha_{\bar{w}}$, we get $z'=(k^t_w)^+$. Hence $(z,z')\in E(G)$ in all cases. 
This is similar when $\bar{u}=w'$ and $\bar{v}=u'$.

\end{enumerate}

When two of $a, b, c$ are in the path containing the vertices $0_l, 1_l, 2_l, 3_l$ of an irreflexive square $C_l$ in $X_i$, we can also have:

\begin{enumerate}\setcounter{enumi}{3}
\item $\bar{u}=w'=3_l$, $\bar{w}=u'=0_l$ and $\bar{v}=v'$, which can happen when $u=3_l, w=0_l$. We then have $z=k_{0_l}^t=0_l$, and $[\bar{u}]=[\bar{v}]$, as $\bar{u}=u$ and $\bar{v}=v'$. We also have that $x'=med \{0_l, 3_l, \bar{v}\}=k_{\bar{v}}^{\alpha}\in \{0_l, 1_l, 2_l\}$. If $x'\in \{0_l,2_l\}$ then $[\bar{u}]\neq [x']$ and we get $z'\in \{(0_l)^+, (2_l)^+=1_l\}$. If $x'=1_l$ then $[\bar{u}]=[x']$ and we get $z'=x'=1_l$. Hence in all cases we get $(z, z')\in E(G)$.

\item $\bar{w}=u'=0_l$, $\bar{v}=w'$, and $\bar{u}=v'$, which can happen when $u=3_l$.
If $v\in P_j$  and $w\not\in P_j$ then we must have $v=2_l$ and $k_w^t\in\{0_l,1_l\}$. We have $[\bar{u}]=[v']\neq [2_l]$ and $[\bar{v}]=[w']\neq [k_w^t]$. If $k_w^t=0_l$ we have $z=0_l$, and  $[\bar{u}]=[\bar{v}]\neq[0_l]$ and $med\{\bar{u}, \bar{v}, \bar{w}\}=0_l$, yielding $z'=(0_l)^+$.  
If $k_w^t=1_l$ we have $z=1_l$, and $[\bar{v}]=[0_l]$, so $[\bar{u}]\neq [\bar{v}]$ and $[\bar{w}]=[0_l]=[k_w^t]$, yielding $z'=k_{\bar{w}}^{\alpha}=\bar{w}=0_l$. In both cases we get $(z, z')\in E(G)$.

\end{enumerate}

\noindent\boxed{\rm{Case} \ 2.4.} Suppose now that $m(a,b,c)=m(u, v, w)=(k_w^t)^+$. So we know that $[u]\neq [v]$ and $[w]\neq [k_w^t]$.

From $[w]\neq [k_w^t]$ we know that $w\notin P_t$, so we can assume that $v\in P_t$ and $P_{\alpha}=P_t$, from $u$ to $P_{X_i}$.
This implies that,  we cannot have $(v, w)\in E(G)$, hence, if $u$ is not a specific vertex on an irreflexive square,  we  have $\bar{w}= w'$.

 It then follows that $[\bar{u}]\neq [\bar{v}]$, independently of $\bar{u}=u', \bar{v}=v'$, or $\bar{u}=v', \bar{v}=u'$. 
 As $[w]\neq [\bar{w}]$ we get that  $[k_{\bar{w}}^t]=[\bar{w}]$, and so   $z'=m(a', b', c')= k_{\bar{w}}^t$. Hence $((k_w^t)^+, z')=((k_w^t)^+, k_w^t)\in E(T)$.

When $u=3_l$ is a vertex on an irreflexive square $C_l$, we can have  $u'=0_l$ and  $k_w^t=0_l$. We will then  get $\bar{w}=u'=0_l$, $\bar{v}=w'$, and $\bar{u}=v'$. Because $[u]\neq [v]$ this case only happens when $v=2_l$, which implies that $z=m(2_l, 3_l, w)=(0_l)^+$. Now 
$[\bar{u}]=[v']=[1_l]$ and $[\bar{v}]=[w']=[0_l]$, since $[w]\neq[ 0_l]$. Also, $med \{\bar{u}, \bar{v}, \bar{w}\}=0_l$, which yields $z'=0_l$, hence $(z, z')=((0_l)^+, 0_l)\in E(G)$.

\subsubsection{Case 3.} 
Suppose now that  $a, b, c$ are  all on different paths, say $P_j$,$P_k$,$P_m$, then we take $t=min \{j,k,m\}$, and  $u,v,w$ such that $d(k^t_u,p_{X_i}) \geq d(k^t_v,p_{X_i}) \geq d(k^t_w,p_{X_i})$. If two of $k^t_u$, $k^t_v$, $k^t_w$ are equal, then we take the next lowest path to base this on i.e. $P_s$ with $s=min\{\{j,k,m\} \,\backslash\, t\}$, and if this distinction is not possible we take $u, v$ to be  two of the vertices in the same colour class.

We have the following possibilities:

 \begin{equation*}
   z= m(a,b,c)=m(u,v,w)=\begin{cases}
x ,& \text{if \ }  [u]=[v]=[x]; \\
x^+,& \text{if \ } [u]=[v]\neq[x];  \\
k^t_w, & \text{if \ } [u]\neq [v] \text{\ and \ } [w]=[k^t_w]; \\
(k^t_w)^+, & \text{if \ } [u]\neq [v] \text{\ and \ } [w]\neq[k^t_w], \\
\end{cases}
\end{equation*}
with $x=med\{a, b, c\}$.
 Let  $u',v',w'\in \{a', b', c'\}$ be such that $(u, u'),(v, v'),(w,w')\in E(G)$.  Let  $\bar{u},\bar{v},\bar{w} \in \{u',v',w'\}$  be such that the ordering with respect to the appropriate indexed path, $P_{\alpha}$, is $d(k_{\bar{u}}^{\alpha}, p_{X_i})\geq d( k_{\bar{v}}^{\alpha}, p_{X_i}) \geq d(k_{\bar{w}}^{\alpha}, p_{X_i})$. 

As all the other cases have already been considered, and the proof would follow symmetrically, we can assume that   $u', v', w'\in X_i$ all belong to different paths. When we have a   choice of $\alpha$, this implies 
that we must have $\bar{u}=u'$,  $\bar{v}=v'$ (or $\bar{u}=v'$, $\bar{v}=u'$) and  $\bar{w}=w'$. If we do not have a distinguishable choice of $\alpha$, then the order can only change when all three vertices are in the same colour class, and so we can treat this as if it has not changed at all. 

Also, it is worth noting that the even though the vertices $u, v, w$ might be in this case and be some of the vertices of an irreflexive square, if $\bar{u}, \bar{v}, \bar{w}$ change order we will get at least two elements on the same path, so we can solve this case symmetrically to one of the ones above.

\noindent\boxed{\rm{Case}\  3.1.} Suppose first that $m(a,b,c)=m(u, v, w)=x$, so  that $[u]=[v]=[x]$. 
Because $u, v, w$ are all on different paths, and we know that $u\in P_t$, we can see that $x=med\{u,v,w\}=k_v^t$.

Then we have  $[u]=[v]\neq [med\{\bar{u}, \bar{v}, \bar{w}\}]$, since $med\{\bar{u},\bar{v}, \bar{w}\}= med\{u, v, w\}=x$. It then follows that $z'=m(a', b', c')=x^+$, and so $(z, z')\in E(T)$.

\bigskip

\noindent\boxed{\rm{Case}\  3.2.} Suppose now that $m(a,b,c)=m(u, v, w)=x^+$, so that $[u]=[v]\neq[x]$.

We have  $[\bar{u}]=[\bar{v}]= [med\{\bar{u}, \bar{v}, \bar{w}\}]$, since $med\{\bar{u},\bar{v}, \bar{w}\}= med\{u, v, w\}=x$. It then follows that $z'=m(a', b', c')=x$, and so $(z, z')\in E(T)$.

\bigskip

\noindent\boxed{\rm{Case}\ 3.3.} Suppose now that $m(a,b,c)=m(u, v, w)=k_w^t$. So we know that $[u]\neq [v]$ and $[w]=[k_w^t]$.
Then  $[\bar{u}]\neq [\bar{v}]$ and,  as $P_{\alpha}$ is identical to $P_t$ from $k^t_v$, we 
have  $k_{\bar{w}}^{\alpha}= k_{w}^t$, which implies that $[\bar{w}]\neq [k_{\bar{w}}^{\alpha}]$. We then obtain $z'=m(a', b', c')= (k_{\bar{w}}^{\alpha})^+= (k_w^t)^+$, which implies that $(k_w^t, z')=(k_w^t, (k_w^t)^+)\in E(G)$.

\bigskip

\noindent\boxed{\rm{Case} \ 3.4.} Finally, suppose that $m(a,b,c)=m(u, v, w)=(k_w^t)^+$. So we know that $[u]\neq [v]$ and $[w]\neq [k_w^t]$. We have $[\bar{u}]\neq [\bar{v}]$, and 
$k_{\bar{w}}^{\alpha}= k_{w}^t$, which implies that $[\bar{w}]= [k_{\bar{w}}^{\alpha}]$. We then obtain $z'=m(a', b', c')= (k_{\bar{w}}^{\alpha})^+= k_w^t$, which implies that $((k_w^t)^+, z')=( (k_w^t)^+, k_w^t)\in E(G)$.

This proves Proposition~\ref{proofmaj}.

\subsection{Reflexive and irreflexive cacti}\label{irreflexive}

We note that when our cactus graph $G$ is fully reflexive then  the only cycles that it can contain are (reflexive) triangles. In this case the majority operation that preserves the graph can be taken to be the median in the spanning tree $G_T$ obtained by removing the edge $(1_l,2_l)$ from each triangle $C_l$. This majority generalises the one given in \cite{feder2003bi} for reflexive trees to cacti.

When the cactus $G$ is fully irreflexive the only cycles it can contain are (irreflexive) squares. In this case we do not have a corresponding reflexive vertex, $p_{X_i}$, that we used to define $m$ in (\ref{majorityfunction}). But we can still define a very similar majority if we fix a vertex in $G$ that is a vertex with degree 2 in a square, or a leaf when no squares are present. Calling this vertex $p$, we can then define a majority function, just like (\ref{majorityfunction}), noting that 
the role of $p_{X_i}$ is taken by $p$, e.g. the vertex $a^+$ is the neighbour of $a$ in the spanning tree $G_T$ closest to $p$. We must also define a specific neighbour of $p$ to be $p^+$ for when $k^t_w=p$ and $[w]\neq[k^t_w]$. Note that the spanning tree is defined just like before, by removing from $G$ the edges $(0_i, 3_i)$, where $0_i$ is the vertex of the square $C_i$ closest to $p$ (or the vertex $p$ itself). As before the $x=med\{a, b, c\}$ taken in $G_T$,  and we have the following restriction of (\ref{majorityfunction}):

\begin{equation*}
    m(a,b,c)=\begin{cases}  
x, & \text{if }\  [u]=[v]=[x]; \\
x^+,& \text{if \ } [u]=[v]\neq[x];  \\
k^t_w, & \text{if \ } [u]\neq [v] \text{\ and\ } [w]=[k^t_w]; \\
(k^t_w)^+, & \text{if \ } [u]\neq [v] \text{\ and\ } [w]\neq[k^t_w], \\
\end{cases}
\label{majorityfunctionirreflexive}
\end{equation*}

We are then in Cases 1., 2. or 3. and the proof that this function is a polymorphism  of  $G$ follows like the proof of Proposition~\ref{proofmaj}. 

\subsection{Extending the majority to forests of cacti.}

When $G$ is a forest of cacti, with connected cactus components $H_1, \ldots, H_z$, we can extend the symmetric majority  operation $m$ defined in (\ref{majorityfunction}) to $G$ as follows: for each connected component $H_i$ we fix a reflexive vertex, or, when $H_i$ has no reflexive vertices,  a vertex with degree 2 in a square, or a leaf when no squares are present (just like in previous subsection). Call $q_i$ this fixed vertex in $H_i$. We then define  and operation $m':G^3\longrightarrow G$ to be  symmetric, and to satisfy for any $x, y, z\in V(G)$:
\begin{enumerate} 
\item if $x, y, z\in H_i$ for some $i=1, \ldots, z$ then  $m'(x, y, z)=m(x, y, z)$ as defined in (\ref{majorityfunction}); 

\item if $x, y, z$  belong to three distinct connected components $H_{i}, H_j, H_k$, then $m'(x, y, z)=a$ with $a\in \{x, y, z\}\cap  H_s$ and $s=\min\{i, j, k\}$; 

\item if $x\in H_i$ and $y, z\in H_j$, with $i\neq j$ then $$m'(x, y, z)=\left\{
\begin{array}{cl}

x, & {\rm if \ } [y]\neq [z] \ {\rm and } \ H_j \ {\rm has\ no \ reflexive \ vertices;} \\
m(q_j, y, z), & {\rm otherwise.}\\

\end{array}. \right.$$

\end{enumerate}
We note that the colour class $[y]$ is defined just like at the start of this section.

\begin{claim}
The function $m'$ is a majority polymorphism of the forest  of cacti $G$. 
\end{claim}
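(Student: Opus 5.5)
To prove the claim I will check two things in turn: that $m'$ satisfies the majority identities, and that it preserves edges. Throughout I use the facts that $m$ is a symmetric majority polymorphism of each connected cactus (Proposition~\ref{proofmaj} together with Subsection~\ref{irreflexive} for the fully reflexive and fully irreflexive components), and that edges never join distinct components.

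\textbf{Majority identities.} Since $m'$ is symmetric by definition, it suffices to show $m'(y,y,x)=y$.

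If $x,y$ lie in the same component $H_i$, then $m'(y,y,x)=m(y,y,x)=y$ by Clause~1.

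If $x\in H_i$ and $y\in H_j$ with $i\neq j$, we are in Clause~3 with $[y]=[y]$. So $m'(x,y,y)=m(q_j,y,y)=y$, because $m$ is a majority on $H_j$. Clause~2 never applies to such a triple.

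\textbf{Edge preservation.} Take edges $(x,x'),(y,y'),(z,z')$ of $G$. Each endpoint $x'$ lies in the same component as $x$, and likewise for $y'$ and $z'$. Hence the triples $(x,y,z)$ and $(x',y',z')$ have the same component pattern and fall under the same clause of $m'$, with the same indices. I then check the three clauses.

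\emph{Clause 1.} Both outputs are values of $m$ on a single cactus, so they are adjacent by Proposition~\ref{proofmaj}.

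\emph{Clause 2.} Both outputs are the coordinate lying in the lowest-indexed component, so the output pair is one of the given edges.

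\emph{Clause 3.} Here $x\in H_i$ and $y,z\in H_j$, and there are two subcases.

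\begin{itemize}
\item Suppose $H_j$ has a reflexive vertex. Then $q_j$ is reflexive, so $(q_j,q_j)$ is an edge. Applying $m$ on $H_j$ to the edges $(q_j,q_j),(y,y'),(z,z')$ shows that $m(q_j,y,z)$ and $m(q_j,y',z')$ are adjacent.

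\item Suppose $H_j$ is irreflexive. Then $H_j$ is bipartite, since its only cycles are squares, and the two-colouring is a proper colouring of the spanning tree consistent with $G$. Adjacent vertices have opposite colours, so $[y]\neq[z]$ if and only if $[y']\neq[z']$.
\begin{itemize}
\item If $[y]\neq[z]$, both outputs are $x$ and $x'$, which are adjacent.
\item If $[y]=[z]$, the outputs are $m(q_j,y,z)$ and $m(q_j,y',z')$. Now $(q_j,q_j)$ is not an edge, so I instead pick a neighbour $q_j^*$ of $q_j$ and aim to show that $m(q_j,y,z)=m(q_j^*,y,z)$ whenever $[y]=[z]$. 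Then applying $m$ to the edges $(q_j,q_j^*),(y,y'),(z,z')$ gives adjacency.
\end{itemize}
\end{itemize}

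\textbf{The main obstacle.} The hard step is this last identity in the irreflexive subcase of Clause~3. My plan is to use the structure of the restricted formula from Subsection~\ref{irreflexive}. When two arguments $y,z$ share a colour class, the value of $m(\cdot,y,z)$ is the median $x$ or $x^+$, decided by comparing colours with $[y]=[z]$. Moving the third argument from $q_j$ to the adjacent $q_j^*$, which is the root or next to the root, shifts the median by at most one step along the root path. I would check directly that the "$x$ or $x^+$" rule returns the same vertex in both cases. This is essentially the argument of Cases 1--3 with $w$ ranging over the root and its neighbour.

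If this identity fails in some configuration, there is a fallback. Note that $m(q_j,y,z)$ and $m(q_j,y',z')$ have opposite colours, because $y,z$ switch colour together. So it would suffice to show they are at distance one, by bounding their distance through the median's movement under a one-step change of $y$ and $z$.
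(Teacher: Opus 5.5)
Your treatment of the majority identities, Clauses~1 and~2, the reflexive subcase of Clause~3 (via the loop $(q_j,q_j)$), and the subcase $[y]\neq[z]$ is correct and matches the paper. The gap is in the only subcase with real content: $H_j$ irreflexive and $[y]=[z]$. There you replace the paper's direct comparison of $m(q_j,y,z)$ with $m(q_j,y',z')$ by the identity $m(q_j,y,z)=m(q_j^*,y,z)$. You only announce a plan for this identity, and you leave $q_j^*$ unspecified, although the identity depends on that choice. When squares are present, $q_j$ is a degree-$2$ vertex of a square, and one of its two neighbours is not adjacent to it in $G_T$. For that neighbour the identity is false. Take $H_j$ to be the irreflexive square on $0,1,2,3$ with $q_j=p=0$, so $G_T$ is the path with edges $(0,1),(1,2),(2,3)$ and $[1]=[3]$. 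Case~1 gives $u=3$, $v=1$, $w=0$, $x=1$ with $[u]=[v]=[x]$, so $m(0,1,3)=1$, while $m(3,1,3)=3$. Your fallback does not rescue this. The two outputs do have opposite colours (in fact $m(q_j,y,z)$ always has colour $[y]$ here), but ``show they are at distance one'' is just the statement to be proved.

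The missing observation is that $q_j^*$ must be the unique $G_T$-neighbour of $q_j$. If $q_j$ lies on a square, the edge $(q_j,3_i)$ is deleted, so in all cases $q_j$ is a leaf of $G_T$. Then, for $y,z\notin\{q_j,q_j^*\}$, the median does not move at all: $med\{q_j,y,z\}=med\{q_j^*,y,z\}$. Both triples fall under Case~1 or Case~2, since $q_j$ and $q_j^*$ lie on every path $P_t$. Apart from one tie, the root-side argument occupies the $w$ slot in both, so both values are ``$x$ if $[x]=[y]$, otherwise $x^+$''. Two checks remain:
\begin{itemize}
\item the tie $k^t_z=q_j^*$ in the Case~2 ordering, where both tie-breaks return $q_j^*$ or $(q_j^*)^+=q_j$, matching $m(q_j,y,z)$;
\item the degenerate cases $y\in\{q_j,q_j^*\}$; for example, $m(q_j^*,q_j,z)$ falls under the $[u]\neq[v]$ branch and returns $k^t_w=q_j$.
\end{itemize}
With these verified your reduction is valid, and it is tidier than the paper's case split on how $y',z'$ sit among $\bar u,\bar v,\bar w$. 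As written, though, the decisive step is a plan rather than a proof.
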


\begin{proof}
The fact that $m'$ is a majority function follows from the fact that $m$ defined in (\ref{majorityfunction}) is a majority function.

To show that $m'$ is edge-preserving, the only  case that does not follow from previous subsections is when we have vertices $x, y, z\in V(G)$ such that $x\in H_i, y, z\in H_j$ and $i\neq j$. So we assume this is the case for $x, y, z$.

Let $x',  y', z'$ be such that $(x, x'), (y, y'), (z, z')\in E(G)$. So we have  $x'\in H_i$ and $y', z'\in H_j$. 

If $[y]\neq [z]$ then we have $[y']\neq [z']$, which yields that  $m'(x, y, z)=x$ and $m'(x', y', z')=x'$.

If $[y]=[z]$ then we have $[y']=[z']$, yielding that $m'(x, y, z)=m(q_j, y, z)$ and $m'(x', y', z')=m(q_j, y', z')$.
We want to show that $(m(q_j,y,z),m(q_j,y',z'))\in E(G)$.

When $q_j$ is a reflexive vertex this follows from Proposition~\ref{proofmaj}.

Suppose now that $H_j$ has no reflexive vertices, so $q_j$ is a vertex of degree two in a square or a leaf in $H_j$. By relabelling the vertices $x, y, z$ with $u, v, w$ like in Section~\ref{irreflexive}, consider first when $y,z$ are $u$ and $v$ (in either order) - note that the other option is that one or both of  $y, z$ equal $q_j$, as with the way the order is defined we will always have $w=w'=q_j$. We know that $[y]=[z]$,  so $m(q_j,y,z)\in \{med\{q_j,y,z\}, (med\{q_j,y,z\})^+\}$.

First assume that $y',z'$ are equal to $\bar{u}$ and $\bar{v}$ (in any way) in the appropriate ordering, as defined in Section~\ref{irreflexive}. If $q_j,y,z$ are on the same path then $[y]=[z]=[med\{q_j,y,z\}]$ and $m(q_j,y,z)=med\{q_j,y,z\}$. If $y',z'$ stay on the same path then $[y']=[z']=[med\{q_j,y',z'\}]$, and $m(q_j,y',z')=med\{q_j,y',z'\}$, and the claim follows. Otherwise, one or both of $y',z'$ move off of the path and we have  $med\{q_j,y,z\}=med\{q_j,y',z'\}$ and $[y']=[z']\neq[med\{q_j,y',z'\}]$. We then get   $m(q_j,y',z')=(med\{q_j,y',z'\})^+$, which is adjacent to $m\{q_j,y,z\}$.
When $q_j,y,z$ are not on the same path, the case when $y',z'$ go onto a path is symmetric to above, so we only need to consider when $q_j,y',z'$ are also not on a path. Here, $med\{q_j,y,z\}=med\{q_j,y',z'\}$, and so $m(q_j,y,z)\in \{med\{q_j,y,z\}, (med\{q_j,y,z\})^+\}$ dependent on the colour class of the median and $m(q_j,y',z')\in \{(med\{q_j,y',z'\})^+, med\{q_j,y',z'\}\}$, respectively. 
Suppose now that  one (or both) of $y',z'$ are equal to $q_j$ and $[y]\neq[q_j]$. Hence, we have $m(q_j, y, z)=med\{q_j,y,z\}$, which is equal to the vertex of $y,z$ that is adjacent to $q_j$ and $m(q_j,y',z')=med\{q_j,y',z'\}=q_j$, which proves the claim. 

The only thing left to  consider is when $q_j$ is one of $u$ or $v$. Note that this can only happen when at least one of $y,z$ equals  $q_j$. Without loss of generality assume that $y=q_j$. Hence, here $m(q_j,y,z)=q_j=med\{q_j,y,z\}$, and $[q_j]=[y]=[z]$.  As $[y']=[z']$ we have that $y', z'\neq q_j$. 
We then have $y'$ and $z'$ equal to $\bar{u'}$ and $\bar{v'}$ (in either order). 

From the way this case is built, we must have that $med\{q_j,y',z'\}\in \{q_j,y'\}$ so in all possibilities for $m(q_j, y', z')$ we will always have that $(m(q_j,y,z),m(q_j,y',z'))\in E(G)$.
\end{proof}

\section{Conclusions}

The main question arising from our work is how far the classification can be pushed. To some extent, we already answered this question. A \emph{$k$-cactus} is a connected graph where every edge belongs to at most $k$ cycles. Thus, a $1$-cactus is a cactus, and $K_4$, in which exactly one vertex is looped, is a $2$-cactus. As we explained, this contains irreflexive $K_3$, yet has a majority polymorphism. Thus, our classification does not extend even to $2$-cacti.

It is clear that the search for which reflexive cacti have a non-crossing semilattice polymorphism as in \cite{hell2014semilattice} is trivial, and these are exactly cacti in which every cycle is a triangle. A natural question is then to ask which irreflexive and partially reflexive cacti have such.

Another question arising is if it is true that every (undirected) graph with  a majority polymorphism also has a symmetric majority polymorphism.  This is not true for digraphs, as  for example the (partially reflexive) digraph  in Figure 6 has a majority polymorphism but not a symmetric majority polymorphism.

\begin{figure}[H]
    \centering
    \begin{tikzpicture}[transform shape]
	\node[circ](0) at (3.9, 10){};
	\node[circ] at (5, 10){};
	\node[circ] at (6.1, 10){};
	\node[circ] at (5, 8.9){};
	\draw[-latex] (5, 10) -- (6, 10);
	\draw[-latex] (5, 10) -- (5, 9);
	\draw[-latex] (5, 10) -- (4, 10);
    \draw [-latex] (0)  to [min distance=8mm,out=120,in=60,in looseness=7] (0);

\end{tikzpicture}
    \caption{A digraph with majority but no symmetric majority.}
    \label{fig:placeholder}
\end{figure}
Also, this is not true for $k$-NUs, with $k>3$, as for example the irreflexive undirected square (cycle on four vertices) has a $4$-NU polymorphism but not a symmetric $4$-NU polymorphism; It does though have a symmetric majority. The graph represented in Figure 7 is  an example of an irreflexive graph that is preserved by a $4$-NU, but not by a  symmetric $4$-NU, and is also not preserved by a majority polymorphism. These polymorphisms were tested using the Graph Polymorphisms Applet \cite{maroti}. 
The question remains if every undirected graph with a majority polymorphism also has a symmetric majority polymorphism.

\begin{figure}[H]
    \centering
    \resizebox{0.15\textwidth}{!}{
    \begin{tikzpicture}[transform shape]
	\node[circ] at (5, 11){};
	\node[circ] at (3, 10){};
	\node[circ] at (5, 9){};
	\node[circ] at (7, 10){};
	\node[circ] at (5, 7){};
	\node[circ] at (3, 8){};
	\node[circ] at (7, 8){};
	\draw (3, 10) to[short] (3, 8);
	\draw (3, 8) to[short] (5, 7);
	\draw (5, 7) to[short] (7, 8);
	\draw (7, 8) to[short] (7, 10);
	\draw (5, 11) to[short] (7, 10);
	\draw (5, 11) to[short] (3, 10);
	\draw (3, 10) to[short] (5, 9);
	\draw (5, 9) to[short] (7, 10);
	\draw (5, 9) to[short] (5, 7);
\end{tikzpicture}}
    \caption{A graph with $4$-NU, no majority, and no symmetric $4$-NU.}
    \label{fig:placeholder}
\end{figure}

\bibliography{cspgraph}
\bibliographystyle{plain}
\end{document}